\newtheorem{theorem}{Theorem}[section]
\newtheorem{lemma}[theorem]{Lemma}
\newtheorem{proposition}[theorem]{Proposition}
\theoremstyle{definition}
\theoremstyle{remark}
\numberwithin{equation}{section}
\newcommand{\norm}[1]{\left\Vert#1\right\Vert}
\newcommand{\abs}[1]{\left\vert#1\right\vert}
\newcommand{\Norm}[1]{{\left\vert\kern-0.25ex\left\vert\kern-0.25ex\left\vert #1 
    \right\vert\kern-0.25ex\right\vert\kern-0.25ex\right\vert}}
\newcommand{\rl}{{\mathbb{R}}}
\newcommand{\cx}{{\mathbb{C}}}
\newcommand{\tensor}{\otimes}
\newcommand{\img}{\mathrm{img}}
\newcommand{\dbar}{\overline{\partial}}
\newcommand{\dom}{\mathrm{Dom}}
\newcommand{\tmop}[1]{\ensuremath{\operatorname{#1}}}
\renewcommand{\Re}{\tmop{Re}}
\newcommand{\oh}{\mathbb{O}}
\newcommand{\p}{\mathbb{P}}
\newcommand{\e}{\mathbb{E}}
\title{The $L^2$-cohomology of a bounded smooth Stein Domain is not necessarily Hausdorff}
\thanks{Debraj Chakrabarti  was partially supported by a grant from the Simons Foundation (\#316632), the Indo-US Virtual Institute for Mathematical and Statistical Sciences (VI-MSS),  and  an Early Career internal grant from Central Michigan University. Mei-Chi Shaw was partially supported by  National Science Foundation grants  DMS-1101415
and DMS-1362175}
\subjclass[2010]{32W05, 32V25, 32C35}
\author{Debraj Chakrabarti}
\address{Department of Mathematics, Central Michigan University, Mt. Pleasant,  MI 48859,  USA}
\email{chakr2d@cmich.edu}
\author{Mei-Chi Shaw}
\address{Department of Mathematics,  University of Notre Dame, Notre Dame, IN 46656, USA. }
\email{mei-chi.shaw.1@nd.edu }
\begin{document}
\begin{abstract} We give an example of a pseudoconvex  domain in a complex manifold whose $L^2$-Dolbeault cohomology  is non-Hausdorff, yet the domain is Stein.   The domain   is a smoothly bounded Levi-flat domain in a two complex-dimensional compact complex manifold.
The domain is  biholomorphic to a product domain in $\cx^2$, hence Stein. This implies that for $q>0$,  the usual Dolbeault cohomology with respect to smooth forms vanishes in degree $(p,q)$.
But the $L^2$-Cauchy-Riemann operator  on the domain does not have closed range on   $(2,1)$-forms and consequently its  $L^2$-Dolbeault cohomology is not Hausdorff. 
\end{abstract}

\maketitle

\section{Introduction}
For each bidegree $(p,q)$, with  $p\geq 0, q>0$, the Dolbeault Cohomology group $H^{p,q}(\Omega)$  of a Stein manifold $\Omega$ in degree $(p,q)$ vanishes,
and indeed this property characterizes Stein manifolds among complex manifolds 
(see e.g. \cite{GuRo, Ho1}).  In particular, with respect to the Fr\'{e}chet topology,
the operator $\dbar$ from the space $\mathcal{A}^{p,q-1}(\Omega)$ of smooth $(p,q-1)$-forms on $\Omega$ to the space $\mathcal{A}^{p,q}(\Omega)$
has closed range, since this range coincides with the null-space of the operator $\dbar:\mathcal{A}^{p,q}(\Omega)\to \mathcal{A}^{p,q+1}(\Omega)$. The aim 
of this paper is to show that things are much more interesting for the $L^2$-cohomology of a bounded Stein domain in a Hermitian manifold.

Recall that a {\em Hermitian manifold} is a complex manifold whose tangent bundle has been endowed with a Hermitian metric.
On a Hermitian manifold, one can define the $L^2$-Dolbeault Cohomology groups, which 
capture the $L^2$-function theory on the manifold.  If the manifold $\Omega$ is realized as a relatively compact (i.e. bounded) domain inside a larger Hermitian manifold
$X$ (with the restricted metric),   the $L^2$-spaces of forms and functions and the $L^2$-cohomology groups
do not depend on   the particular choice of the metric on $X$. According to a famous theorem of H\"{o}rmander, the $L^2$-Dolbeault groups 
$H^{p,q}_{L^2}(\Omega)$
vanish for $q>0$, provided $\Omega$ is a bounded Stein  domain in a Stein manifold.
 In particular, the range of the $\dbar$ operator in the $L^2$-sense
is closed in each degree.  One can  also show that if $\Omega$ is a smoothly bounded domain  with strongly pseudoconvex boundary   in  a complex Hermitian manifold,
then again $\dbar$ has closed range, and the $L^2$-Dolbeault groups are finite dimensional. 

The question arises whether    on a bounded Stein domain in a general complex manifold, the $\dbar$-operator still has closed range,
or equivalently whether the $L^2$-cohomology groups  of such a domain are Hausdorff in the natural quotient topology. This is not the case:
\begin{theorem} \label{thm-main} There is a compact complex surface $X$ and a relatively compact, smoothly bounded, Stein domain $\mathbb{O}$ in $ X$,
 such that the range of the $L^2$ $\dbar$-operator from the space $L^2_{2,0}(\oh)$ of square integrable $(2,0)$-forms to the space 
$L^2_{2,1}(\oh)$ of the square integrable $(2,1)$-forms on $\oh$ is not closed. Consequently, the $L^2$-cohomology space $H^{2,1}_{L^2}(\oh)$ is not Hausdorff in its natural quotient topology. 
\end{theorem}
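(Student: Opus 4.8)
The plan is to produce the pair $(X,\oh)$ explicitly, to verify by an explicit uniformization that $\oh$ is Stein — so that the classical facts recalled in the Introduction apply to it — and then to locate the failure of closed range in the discrepancy between the metric that $\oh$ inherits from $X$ and the product metric on the product of planar domains to which $\oh$ is biholomorphic. I would first construct $X$ as a compact complex surface carrying a smooth, compact, Levi-flat real hypersurface $M$, following the familiar mechanism that realizes Levi-flat hypersurfaces as level sets of pluriharmonic functions on suitable ruled or elliptic surfaces over an elliptic curve, arranged so that one component $\oh$ of $X\setminus M$ is the total space of a holomorphic fibration over a planar base whose monodromy in the fibre is removed by an explicit holomorphic change of coordinates. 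That change of coordinates yields a biholomorphism $\Phi\colon\oh\to\Omega_1\times\Omega_2$ onto a product of two planar domains $\Omega_1\times\Omega_2\subset\cx^2$; since each factor is Stein and products of Stein manifolds are Stein, $\oh$ is Stein. Consequently $H^{p,q}(\oh)=0$ for every $q>0$ and the smooth ($C^{\infty}$, Fr\'echet) Dolbeault complex of $\oh$ has closed range in all degrees, so the phenomenon in question is purely an $L^2$ effect.

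Next comes the functional-analytic reduction. Since $\dbar^2=0$, the range $R$ of $\dbar\colon L^2_{2,0}(\oh)\to L^2_{2,1}(\oh)$ lies in the closed subspace $Z=\ker\bigl(\dbar\colon L^2_{2,1}(\oh)\to L^2_{2,2}(\oh)\bigr)$, and $H^{2,1}_{L^2}(\oh)=Z/R$ carries the quotient topology; a quotient of a Hilbert space by a linear subspace is Hausdorff if and only if that subspace is closed. Hence it suffices to prove that $R$ is not closed in $L^2_{2,1}(\oh)$, i.e. that $\dbar\colon L^2_{2,0}(\oh)\to L^2_{2,1}(\oh)$ does not have closed range. (Equivalently, via the conjugate-linear Hodge star on the surface $X$, that $\dbar\colon L^2_{0,1}(\oh)\to L^2_{0,2}(\oh)$ has non-closed range — either formulation can be used.)

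The core of the proof is to establish this failure. Transport everything to $\Omega_1\times\Omega_2$ by $\Phi$ and trivialize the canonical bundle of $\oh$, which is trivial, by the nowhere-vanishing holomorphic $(2,0)$-form $\omega=\Phi^{*}(d\zeta_1\wedge d\zeta_2)$. Because $\omega$ generates the canonical bundle, the warping factors cancel against $\abs{\omega}^2$ and $h\mapsto h\,\omega$ identifies $L^2_{2,0}(\oh)$ with $L^2(\Omega_1\times\Omega_2)$ for Lebesgue measure, while $L^2_{2,1}(\oh)$ is identified with pairs $(\alpha,\beta)$ of functions normed by $\int\abs{\alpha}^2\,d\mu_1+\int\abs{\beta}^2\,d\mu_2$ with $d\mu_1,d\mu_2$ built from the inverse of $\Phi_{*}g$; the decisive point is that $\Phi_{*}g$ is \emph{not} the product metric, so the weights $d\mu_1,d\mu_2$ are anisotropic and degenerate (or blow up) near $\partial\oh$ by an amount fixed by the holonomy of the Levi foliation of $M$. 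Under these identifications $\dbar(h\,\omega)$ corresponds to $(\partial_{\bar\zeta_1}h,\partial_{\bar\zeta_2}h)$, and since on the Stein manifold $\oh$ a solution of $\dbar(h\omega)=f$ is unique modulo $\mathcal O(\oh)$, a $\dbar$-closed $f\in L^2_{2,1}(\oh)$ lies in $R$ if and only if some solution $h_0$ of $\dbar(h_0\omega)=f$ satisfies $h_0\in L^2(\Omega_1\times\Omega_2)+\mathcal O(\oh)$. I would then exhibit one explicit $\dbar$-closed $f$, coming from a function $h_0$ that blows up near a chosen piece of $\partial\oh$ with a genuinely non-holomorphic singularity whose $\dbar$ is just tame enough to lie in $L^2(d\mu_1)\times L^2(d\mu_2)$, and show that (i) $f$ lies in the closure of $R$, since truncations $\chi_j h_0\,\omega\in L^2_{2,0}(\oh)$ with $\chi_j\uparrow 1$ chosen by a logarithmic cut-off estimated against the degenerate weight give $\dbar(\chi_j h_0\omega)\to f$ in $L^2_{2,1}(\oh)$, yet (ii) $f\notin R$, since the boundary growth of $h_0$ cannot be absorbed by any $\phi\in\mathcal O(\oh)=\mathcal O(\Omega_1\times\Omega_2)$, so $h_0\notin L^2(\Omega_1\times\Omega_2)+\mathcal O(\oh)$. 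This shows $R$ is not closed, completing the proof.

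The soft steps — the construction of $X$ and $M$, the uniformization, Steinness, and the reduction to non-closed range — are routine; the heart of the matter is the analytic core, and within it the simultaneous requirements (i) and (ii). One must arrange the degeneracy exponent of $\Phi_{*}g$ near $\partial\oh$ to lie in a non-empty window: degenerate enough that the non-holomorphic singularity of $h_0$ has weighted-$L^2$ $\dbar$ and $f$ is a limit of honest members of $R$, yet not so degenerate that a holomorphic function could pull $h_0$ back into $L^2$. Securing such a window is exactly what forces the particular choice of $X$ and of $M$, and the logarithmic cut-off needed to make (i) rigorous is the delicate technical point. (As an alternative to producing $f$ by hand, one may diagonalize the Laurent/Fourier modes coming from an annular factor of $\Omega_1\times\Omega_2$ and exhibit a continuous spectrum of $\dbar^{*}\dbar$ accumulating at $0$, the spectral manifestation of the same non-closed-range phenomenon.)
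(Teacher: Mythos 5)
Your soft steps are fine and agree with the paper: $X=\p^1\times\e$, the Ohsawa domain $\oh=\{\Re(zw)>0\}$ biholomorphic to $\cx^*\times\mathbb{A}$, Steinness from the product structure, the metric-independent identification of $L^2_{2,0}(\oh)$ with an unweighted $L^2$ space on the product model, and the reduction of non-Hausdorffness to non-closedness of the range $R$. But the entire analytic core of your argument --- exhibiting an explicit $\dbar$-closed $f\in L^2_{2,1}(\oh)$ together with a proof that $f\in\overline{R}\setminus R$ --- is announced rather than carried out, and there is strong evidence that it is genuinely hard: the authors state explicitly that they can produce a harmonic $(2,1)$-form $g=\overline\phi\wedge\phi\wedge\psi$ whose restriction is \emph{not} in $R$ (Section~5), but that they \emph{do not know} whether $g|_{\oh}$ lies in $\overline{R}$. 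So the one candidate the paper can control fails to witness non-closedness in your sense, and no replacement is constructed. Your ``window'' heuristic for the degeneracy of the weights is also misplaced: in the product model the volume form is $\beta^2(1+|Z|^2)^{-2}|W|^{-2}\,dV$, so nothing degenerates near the Levi-flat pieces $\Sigma^\pm$ of $b\oh$ (where $|W|$ approaches the boundary circles of the bounded annulus); the only degeneracy is in the $Z$-direction at the two boundary tori. Moreover your parenthetical spectral alternative, which diagonalizes modes coming from one factor, runs directly into Proposition~5.4 of the paper: for forms depending on the $\cx^*$ factor only, $\dbar$ \emph{does} have closed range, so the phenomenon is invisible mode-by-mode in the most natural decomposition and must come from an interaction of the factors. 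As written, requirements (i) and (ii) are exactly the missing proof, not a reduction of it.

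The paper avoids this construction entirely and argues by contradiction through CR extension theory, which is the idea absent from your proposal. Assuming $R$ is closed, $L^2$ Serre duality gives $H^{0,1}_{c,L^2}(\oh)=0$ (Lemma~4.4, proved by extending a $\dbar_c$-closed $(0,1)$-form by zero to the compact K\"ahler surface $X$, checking orthogonality to the one-dimensional harmonic space spanned by $\overline\psi$ via a Friedrichs-lemma approximation, solving on $X$, and using that $W^1$ holomorphic functions on $X\setminus\overline\oh$ are constants). Closed range also forces, via Kohn's observation, Sobolev $W^s$-regularity of the $\dbar$-Neumann operator for some $0<s<\tfrac12$, whence every CR function in $W^{1/2-s}(b\oh)$ extends holomorphically to $\oh$. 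This is then contradicted by the locally constant function equal to $1$ on $\Sigma^+$ and $0$ on $\Sigma^-$: it is CR in the distributional sense across the two complex tori separating the boundary, lies in $W^{s}(b\oh)$ for all $s<\tfrac12$, and visibly does not extend. If you want to salvage your route, you must either produce the explicit $f$ with a complete verification of (i) and (ii) --- which would be a genuine improvement on the paper --- or switch to the duality-plus-CR-extension mechanism.
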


The complex surface $X$ in the above theorem can be taken to be the product $\p^1\times \e$, where  $\p^1$ is the  projective line, and $\e$ is an elliptic curve defined by a  rectangular 
lattice in $\cx$. The domain $\oh$    defined below in Section 4  is  a domain introduced by Ohsawa 
in \cite{ohsawa}, which has the  
property that in spite of the fact that its boundary is smooth (even real analytic), the domain $\oh$ is biholomorphic to a product of two planar domains, one of which 
is the punctured plane $\cx^*=\cx\setminus\{0\}$, and the other is a bounded annulus.
Though $\oh$ is a Stein domain, its properties are quite unlike what one would expect from a bounded pseudoconvex domain in a Stein manifold.
The domain  $\oh$ used here as a counterexample, as well as closely related domains, have been studied before in several contexts (see \cite{Ba0, do, Ba1}).

The main tool in the proof of Theorem~\ref{thm-main} is the study of holomorphic extension of CR functions defined on the boundary of 
$\oh$. On a general complex manifold $M$, there is no Hartogs-Bochner phenomenon (see \cite{KR} or \cite{Lu}). 
 In other words,  it is not true that a CR function defined on the connected boundary $b\Omega$ of a smoothly 
bounded domain $\Omega\Subset M$ can be extended holomorphically into $\Omega$. Such Hartogs-Bochner extension does take place when the ambient $M$ is $\cx^n, n\geq 2$, or $M$ is Stein with 
dimension at least 2. In Section~\ref{sec-cr} below, we study some obstructions 
to holomorphic extension of CR functions, and relate them to the non-closed range property for the $\dbar$-operator. 
Suppose that the range of $\dbar$ is closed in $L^2_{2,1}(\oh)$. Using the  $L^2$ version of   Serre duality    (see  \cite{l2serre}),  the $L^2$ cohomology for $(2,1)$-forms  $H^{2,1}_{L^2} (\oh)$ is isomorphic to   
$H^{0,1}_{c,L^2}(\oh)$, the $L^2$ cohomology  for $(0,1)$-forms with minimal realization of the $\dbar$-operator (see Section~\ref{sec-defnot} below for definitions).    
One of the main observations in this paper is  that  if  the range of $\dbar$ is closed in $L^2_{2,1}(\oh)$, then 
$H^{0,1}_{c,L^2}(\oh)$ vanishes.  Then the $\dbar$-Neumann operator exists on $L^2_{2,1}(\oh)$ and   $H^{2,1}_{L^2} (\oh)$ vanishes. In this case, one can show that 
  the $\dbar$-Neumann operator is regular from $W^\epsilon$ to itself for some small $\epsilon>0$. Then from the $L^2$  $\dbar$-Cauchy problem, every CR  function on the  boundary $b\oh$  in $W^{\frac{1}{2} -\epsilon} (b\oh)$  can be  extended  holomorphically to $\oh$.  On the other hand,  the boundary of $\oh$ is Levi-flat and contains two complex tori, which divide the boundary into two disjoint parts.  If we take two distinct constants on the disjoint boundary pieces, it defines a    
CR function on the boundary of $\oh$ which is in $W^{\frac{1}{2} -\epsilon} (b\oh)$, but they do not extend (see Proposition 3.1).

There are well-known elementary examples of non-Stein domains in $\cx^2$ where closed range of the $\dbar$-operator  does not hold either in the  Fr\'{e}chet topology (cf. \cite[Section~14]{serre}) or in the $L^2$-topology (cf. \cite[Page 75--76]{fk}).
There is also  an example by  Malgrange of a bounded
domain with pseudoconvex boundary  (in a two complex dimensional torus)  for which the $\dbar$-operator does not have closed range in the Fr\'{e}chet sense (cf. \cite{malgrange}).
Similar phenomena happen in some noncompact complex Lie groups (cf. \cite{kazama}). 
As already noted, in a Stein manifold, in the Fr\'{e}chet topology on the space of smooth forms  the $\dbar$-operator has closed range. On the other hand, in the $L^2$-sense, many unbounded domains in $\cx^n$ (for example
the whole of $\cx^n$ itself) are easily seen not to have the closed range property for the $\dbar$-operator. 
The example of $\oh$ considered here shows that such non-closed range phenomena are also possible on bounded Stein domains with
smooth boundary. This is in sharp contrast with the case for bounded domains  in $\Bbb C^n$.   In fact, it has been shown recently that for a bounded Lipschitz domain  in $\cx^2$ such that the complement of the domain is connected in $\cx^2$, the $\dbar$ equation has closed range in $L^2_{0,1}(\Omega)$ if and only if the domain is pseudoconvex  (cf. \cite[Theorem~3.4]{LS}).

The range of the map   $\dbar:L^2_{2,0}(\oh)\dashrightarrow L^2_{2,1}(\oh)$ is not closed, and  it is  desirable to find an explicit  $\dbar$-closed  $L^2$  $(2,1)$-form  which is not in the $L^2$  range.
In   Section~\ref{sec-zonly},   we show that there is a  harmonic $(2,1)$-form  $g$ on $X$ such that $g|_{\oh}$  is not  in the range of $\dbar$ in the $L^2$ sense, but we do not know whether $g|_{\oh}$ is in the closure of the range.
On the other hand, there exists a solution $v$ of $\dbar  v=g$  which is in $W^{-\epsilon}(\oh)$ for each $\epsilon>0$.
Recalling that  as a complex manifold, $\oh$ is the product of the punctured plane $\cx^*$ with an annulus, one may suspect that the lack of closed range of $\dbar$ is somehow connected to the unboundedness of the 
punctured plane. In Section~\ref{sec-zonly}, we also show that this is not the case by looking at the $L^2$-function theory on $\oh$ in more detail, and in particular show that for $(2,0)$-forms which 
depend only on the $\cx^*$ factor, the $\dbar$ operator actually has closed range. This shows that the non-closed range property is more subtle. 

\bigskip
\noindent
{\em Acknowledgements:} Debraj Chakrabarti would like to  thank  K. Sandeep for helpful discussions on fractional Sobolev spaces.   Mei-Chi Shaw would like to thank
David   Barrett for pointing out his paper \cite {Ba0} which inspires the present work.  Both authors  would like to thank Sophia Vassiliadou for her comments on  a previous version of the manuscript, and the referee for numerous helpfull suggestions.

 \section{Definitions and notation}\label{sec-defnot}
We briefly recall the definitions related to the $L^2$-Dolbeault cohomology. Details on the $L^2$-theory of the $\dbar$-equation, and the associated $\dbar$-Neumann problem may be found
in standard texts on the subject, e.g. \cite{Ho1, fk,chen-shaw,straube}, and details on the $L^2$-version of Serre duality theorem used in this paper may be found  in \cite{l2serre}.

Let $\Omega$ be a Hermitian manifold, i.e., a complex manifold with a Riemannian metric which is Hermitian with respect to the complex structure on the tangent spaces. 
We can define in a natural way the $L^2$-spaces of $(p,q)$-forms $L^2_{p,q}(\Omega)$. In the special case  when $\Omega$ is realized as a relatively compact domain in a larger complex manifold $X$, 
and is given a Hermitian metric by restricting from $X$, the spaces $L^2_{p,q}(\Omega)$ are defined independently of the particular choice of the metric in $X$. In our application, the domain 
$\Omega$ will be of this latter type.

The differential operator $\dbar$ is defined classically on the space $\mathcal{A}^{*,*}(\Omega)$ of smooth forms on $\Omega$, and maps $(p,q)$-forms to $(p,q+1)$-forms.
Standard techniques of functional analysis allow us to construct extensions of the operator $\dbar$ acting as closed unbounded operators on the spaces $L^2_{p,q}(\Omega)$.
Two such realizations of $\dbar$ as an unbounded operator on $L^2_{*,*}(\Omega)$ are of fundamental importance. First, the (weak) {\em maximal realization} of $\dbar$ is defined 
to have domain $\mathcal{D}_{\rm max}^{p,q}=\{f\in L^2_{p,q}(\Omega)\mid \dbar f \in L^2_{p,q+1}\}$, where $\dbar$ acts on an $L^2$-form in the sense of distributions. 
By standard abuse of notation we will denote by $\dbar$ the unbounded, closed, densely Hilbert space operator from $L^2_{p,q}(\Omega)$ to $L^2_{p,q}(\Omega)$ with domain
$\mathcal{D}_{\rm max}^{p,q}$, and such that for $f\in \mathcal{D}_{\rm max}^{p,q}$, we take $\dbar f$ in the sense of distributions. We also define the (strong) {\em  minimal realization}
$\dbar_c$ as the closed operator with smallest domain $\mathcal{D}_{\rm min}^{p,q}$ which extends the restriction of the $\dbar$-operator to the smooth compactly supported forms. This may be interpreted 
by saying that forms in $\mathcal{D}^{p,q}_{\rm min}$ satisfy a boundary condition.
The existence of such a ``closure''{} $\dbar_c$ is proved in functional analysis. Note that $\mathcal{D}_{\rm max}^{p,q}\supsetneq \mathcal{D}_{\rm min}^{p,q}$ on a bounded domain $\Omega$, 
and the maximal realization $\dbar$ is an extension of the operator $\dbar_c$. Note that $\dbar\circ\dbar =\dbar_c\circ\dbar_c =0$, so we have two different examples of 
{\em Hilbert Complexes }(cf. \cite{hilcom}), i.e., a chain complex (in the sense of homological algebra) in which the differential operators are unbounded operators on Hilbert spaces, and the chain groups
are the domains of these operators. One can associate to such a complex its cohomology, and this way we obtain, for any Hermitian manifold $\Omega$, its  {\em $L^2$-Dolbeault cohomology} groups:
\[ H^{p,q}_{L^2}(\Omega) = \frac{\ker(\dbar :L^2_{p,q}(\Omega)\dashrightarrow L^2_{p,q+1}(\Omega))}{{ \img}(\dbar :L^2_{p,q-1}(\Omega)\dashrightarrow L^2_{p,q}(\Omega))},\]
where the dashed arrows signify that the maximal realization is defined only on a dense subspace of the Hilbert space $L^2_{*,*}(\Omega)$ (in this case $\mathcal{D}^{*,*}_{\rm max}$). Similarly, we can define the 
{\em $L^2$-Dolbeault cohomology with minimal realization} by setting
\[ H^{p,q}_{c, L^2}(\Omega) = \frac{\ker(\dbar_c :L^2_{p,q}(\Omega)\dashrightarrow L^2_{p,q+1}(\Omega))}{{ \img}(\dbar_c :L^2_{p,q-1}(\Omega)\dashrightarrow L^2_{p,q}(\Omega))},\]
where now the minimally realized Hilbert space operators $\dbar_c$ are used. In general, the $L^2$-Dolbeault groups, and the $L^2$-Dolbeault groups with minimal realization are very different, but under appropriate
hypotheses, there is a relation of duality between the two collections of groups, which is an analog of the classical Serre duality in the $L^2$-setting (see \cite{l2serre} for details).

Note that the cohomology spaces $H^{p,q}_{L^2}(\Omega) $ and $H^{p,q}_{c, L^2}(\Omega)$ are complex vector spaces, and have a natural linear topology as quotients of linear topological spaces. In particular,
for $H^{p,q}_{L^2}(\Omega) $, if the range ${\img}(\dbar :L^2_{p,q-1}(\Omega)\dashrightarrow L^2_{p,q}(\Omega))$ is a  closed subspace of $L^2_{p,q}(\Omega)$, then $H^{p,q}_{L^2}(\Omega) $ has the natural structure of a Hilbert space itself.
However, if ${ \img}(\dbar :L^2_{p,q-1}(\Omega)\dashrightarrow L^2_{p,q}(\Omega))$ is not closed, the quotient topology is not even Hausdorff. The closed range property for the $\dbar$-operator has other important consequences,
and most importantly, it is equivalent to the possibility of solving the $\dbar$-problem with $L^2$-estimates.

The Hilbert space adjoints $\dbar^*$ and $\dbar_c^*$ are again closed, densely defined, unbounded operators on $L^2_{*,*}(\Omega)$. The {\em Complex Laplacian } is the operator $\Box=\dbar\dbar^*+\dbar^*\dbar$,
and its inverse (modulo kernel) is the $\dbar$-Neumann operator ${\mathsf N}$. Both map the space $L^2_{p,q}(\Omega)$ to itself for each degree $(p,q)$.  The kernel of $\Box$ in degree $(p,q)$ consists of the  {\em Harmonic
forms.} More details on these constructions may be found in the texts mentioned above.

\section{Extension of CR functions from boundaries of manifolds}
\label{sec-cr}

\begin{proposition}\label{prop-tool}
Let $X$ be a  Hermitian manifold of complex dimension $n$, and let $\Omega$ be a smoothly bounded relatively compact domain 
in $X$. Suppose that
\begin{enumerate}[(a)]
\item $b\Omega$ is connected, and there is a smooth complex hypersurface $H$ (not necessarily connected)  of $X$ such that $H\subset b\Omega$ and $b\Omega\setminus H$ is not connected.
\item The $L^2$-cohomology in degree (0,1) with minimal realization vanishes, i.e.,
\[ H^{0,1}_{c,L^2}(\Omega)=0.\]
\end{enumerate}
Then the $L^2$ $\dbar$-operator from $L^2_{n,n-2}(\Omega)$ to $L^2_{n,n-1}(\Omega)$ does not have closed range. Consequently,
the  $L^2$-cohomology group $ H^{n,n-1}_{L^2}(\Omega)$
is not Hausdorff in its natural topology.
\end{proposition}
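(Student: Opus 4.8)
The plan is to argue by contradiction: assuming $\dbar\colon L^2_{n,n-2}(\Omega)\to L^2_{n,n-1}(\Omega)$ has closed range, I will show that every CR function on $b\Omega$ of Sobolev class $W^{1/2-\epsilon}$ (for a suitable small $\epsilon>0$) extends holomorphically into $\Omega$, and then contradict this using hypothesis (a). First, closed range of this operator says precisely that $H^{n,n-1}_{L^2}(\Omega)$ is Hausdorff, since the numerator $\ker(\dbar\colon L^2_{n,n-1}\to L^2_{n,n})$ is automatically closed. The $L^2$-version of Serre duality \cite{l2serre} then gives a topological isomorphism between $H^{n,n-1}_{L^2}(\Omega)$ and the dual of $H^{0,1}_{c,L^2}(\Omega)$, which is $0$ by hypothesis (b); hence $H^{n,n-1}_{L^2}(\Omega)=0$, and together with the (trivial) closed range of $\dbar$ at the top degree this gives that $\Box$ has closed range on $L^2_{n,n-1}(\Omega)$, so that the $\dbar$-Neumann operator $\mathsf N$ exists there as a bounded operator.

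Next comes the step where the smoothness of $b\Omega$ is used, and which I expect to be the main obstacle: one should show that $\mathsf N$ maps $W^\epsilon_{n,n-1}(\Omega)$ to itself for some small $\epsilon>0$. Since $\Box$ is elliptic in the interior, this amounts to a small gain in Sobolev regularity near the boundary, but in the intended application $b\Omega$ is Levi-flat, so the usual subelliptic estimates for the $\dbar$-Neumann problem are unavailable and the gain must be wrung out of the smoothness of $b\Omega$ together with the mere existence of $\mathsf N$. Granting this, one runs the $L^2$ $\dbar$-Cauchy problem: given a CR function $f\in W^{1/2-\epsilon}(b\Omega)$, extend it to $\tilde f\in W^{1-\epsilon}(\Omega)$ by the trace theorem; the CR condition forces $\dbar\tilde f$ to restrict to $0$ on $T^{0,1}(b\Omega)$, which places it in the minimal domain, so the regularity of $\mathsf N$ lets one solve $\dbar_c u=\dbar\tilde f$ with $u$ of zero boundary trace and of enough Sobolev regularity for that trace to be meaningful; then $F:=\tilde f-u$ is holomorphic in $\Omega$, lies in $W^{1-\epsilon}(\Omega)$, and, since $1-\epsilon>\tfrac12$, belongs to the Hardy space of $\Omega$ with $f$ as its $L^2$-boundary value.

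Finally, the contradiction. By hypothesis (a), at a point $p\in H$ the tangent space $T_pH$ and the maximal complex subspace of $T_p(b\Omega)$ are both complex of real dimension $2n-2$, hence coincide; thus $H$ is a complex leaf of the complex tangent distribution of $b\Omega$, and the direction normal to $H$ within $b\Omega$ is transverse to $T^{0,1}(b\Omega)$. Write $b\Omega\setminus H=S_1\sqcup S_2$, pick $c_1\neq c_2$, and let $f_0$ equal $c_1$ on $S_1$ and $c_2$ on $S_2$. Because the jump of $f_0$ is across $H$ in a non-CR direction, $f_0$ is a CR function in the sense of distributions; being bounded with a jump across the smooth hypersurface $H$ it lies in $W^s(b\Omega)$ for every $s<\tfrac12$, hence in $W^{1/2-\epsilon}(b\Omega)$. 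The previous step then yields $F\in W^{1-\epsilon}(\Omega)\cap\mathcal O(\Omega)$ with boundary value $f_0$. Now slice: through a point $p\in H$ choose a small embedded holomorphic disk $\Delta\subset X$ transverse to both $H$ and $b\Omega$, so that $\Delta\cap\Omega$ is a planar domain whose boundary arc lying on $b\Omega$ passes through $p$ and meets both $S_1$ and $S_2$ in arcs of positive length. For almost every such $\Delta$ the restriction $F|_{\Delta\cap\Omega}$ lies in the Hardy space of $\Delta\cap\Omega$ and has boundary value $c_1$ on the $S_1$-arc and $c_2$ on the $S_2$-arc; but then $F|_{\Delta\cap\Omega}-c_1$ is a Hardy-space function vanishing on a boundary arc of positive measure, hence identically $0$ by the uniqueness theorem for $H^p$ boundary values, which forces $c_1=c_2$ — a contradiction. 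Therefore $\dbar\colon L^2_{n,n-2}(\Omega)\to L^2_{n,n-1}(\Omega)$ does not have closed range; as the numerator $\ker(\dbar\colon L^2_{n,n-1}\to L^2_{n,n})$ is closed, the quotient $H^{n,n-1}_{L^2}(\Omega)$ is then not Hausdorff. Besides the regularity of $\mathsf N$, the remaining delicate points are the bookkeeping of fractional Sobolev indices in the $\dbar$-Cauchy problem and the verification that the slices $F|_{\Delta\cap\Omega}$ genuinely belong to Hardy spaces with the expected boundary values.
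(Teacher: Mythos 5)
Your proposal is correct in outline and follows essentially the same route as the paper: assume closed range, use $L^2$ Serre duality together with hypothesis (b) to get $H^{n,n-1}_{L^2}(\Omega)=0$, a trivial harmonic space, and a bounded $\dbar$-Neumann operator $\mathsf N$; upgrade to a small fractional Sobolev gain; solve the $\dbar$-Cauchy problem to extend every CR function of class $W^{1/2-\epsilon}(b\Omega)$ holomorphically; and contradict this with the locally constant jump function across $H$. The step you flag as the main obstacle and then grant --- boundedness of $\mathsf N$ on $W^{\epsilon}_{n,n-1}(\Omega)$ for some small $\epsilon>0$ --- is precisely the paper's Proposition~\ref{prop-kohn}, an observation of Kohn: given only that $\mathsf N$ is bounded on $L^2$ and the harmonic space is trivial, the Kohn--Nirenberg arguments with tangential pseudodifferential operators of fractional order yield boundedness on $W^\epsilon$ for $0<\epsilon<\epsilon_0\le \frac12$; the paper then dualizes (using $W^{s}(\Omega)=W^{s}_0(\Omega)$ for $s<\frac12$) to get $\dbar\mathsf N$ bounded on $W^{-s}(\Omega)$, which is the form actually applied to $\dbar\tilde f\in W^{-s}(\Omega)$, so your Sobolev bookkeeping should be arranged on the negative side. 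Your endgame (slicing by holomorphic disks and Hardy-space boundary uniqueness) is a workable variant of the paper's simpler one: any holomorphic extension $F$ would be smooth up to the boundary on $b\Omega\setminus H$ by estimates for Bochner--Martinelli type integrals, hence simultaneously $\equiv c_1$ and $\equiv c_2$ by classical boundary uniqueness; this avoids the Fubini-type verification that almost every slice of $F$ lies in a Hardy class with the expected boundary values. Finally, the verification that $f_0$ is CR in the distributional sense deserves more than the phrase ``the jump is in a non-CR direction'': the paper's computation exploits quantitatively that the coefficients of the CR vector fields transverse to $H$ vanish on $H$ (hence are $O(\epsilon)$ on an $\epsilon$-collar), which is exactly what is needed to beat the $O(1/\epsilon)$ derivative of the cutoff in the regularization.
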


\begin{proof} For a contradiction, assume that the $\dbar$-operator from $L^2_{n,n-2}(\Omega)$ to $L^2_{n,n-1}(\Omega)$ has closed range.
We claim that then there is an $\epsilon$ with $0<\epsilon<\dfrac{1}{2}$ such that  {\em each CR function $f$ on $b\Omega$  which belongs to  the $L^2$-Sobolev space $W^\epsilon(b\Omega)$ of order $\epsilon$ extends to
 holomorphic function $F$ on $\Omega$.} (In the Sobolev context, ``extension''{} means that $F\in W^{\frac{1}{2}+\epsilon}(\Omega)$, 
and the Sobolev trace of $F$ on $b\Omega$ is $f$). Postponing the proof of the claim for now, to produce a contradiction it suffices to produce a CR function on $b\Omega$ which is of class $W^s$ on $b\Omega$, 
for  each $0<s<\dfrac{1}{2}$, and which does not admit a holomorphic extension to $\Omega$. But such a function is easy to construct as follows.

Let $b\Omega^+$ be a connected component of $b\Omega\setminus H$, and let  $b\Omega^- $ be the union of the remaining connected 
components.
By hypothesis, neither of $b\Omega^\pm$
is empty, their union is $b\Omega\setminus H$, and the closures of  $b\Omega^+$ and $b\Omega^-$ meet along the complex 
hypersurface $H$. 
Define a locally integrable function $f$ on $b\Omega$ by setting $f\equiv1 $ on $b\Omega^+$ and $f\equiv 0$ on $b\Omega^-$. 

We claim that  $f$ is CR on $b\Omega $ in the sense of distributions. This is clear except along the complex hypersurface $H$, where $f$ has
a jump discontinuity. To verify the fact that $f$ is CR on $H$, let $p\in H$, and choose a local $\mathcal{C}^\infty$ real  coordinate system $(x_1,y_1,\dots, x_{n-1},y_{n-1},t)$ on 
a neighborhood of $p$ in $b\Omega$ such that the hypersurface $H$ is given by $\{t=0\}$, and along $H$, $z_j=x_j+iy_j$ is a complex coordinate on $H$ for $1\leq j \leq n-1$.
 We can further assume that $f=1$ on $\{t>0\}$ and $f=0$ on $\{t<0\}$. At the point $p$, a basis of $(0,1)$-vector fields is given by the $(n-1)$ vectors $\dfrac{\partial}{\partial \overline z_j}$, where $1\leq j \leq n-1$. 
It follows that in a neighborhood of $p$ in $b\Omega$  there are vector fields 
 \[Z_j= \frac{\partial}{\partial \overline{z}_j}+ \sum_{\ell=1}^{n-1}\left(a^\ell_j\frac{\partial}{\partial x_\ell}+b^\ell_j\frac{\partial}{\partial y_\ell}\right)+c \frac{\partial}{\partial t},\]
which pointwise span the CR vector fields, where $a^\ell_j, b^\ell_j$ and $c$ (with $1\leq j,\ell \leq n-1$) are smooth functions near $p$ which vanish identically on the hypersurface $H$.
To show that $f$ is CR, we need to check that  for each $j$, we have  $Z_jf=0$ in the sense of distributions (in order to induce a distribution corresponding to the function $f$, we use the standard 
volume form of $\rl^{2n-1}$).  For each $\phi$ which is smooth and compactly supported near $p$, we need to show that $(Z_jf)(\phi)=0$, 
 i.e., $\int_{t>0}{Z_j^t\phi}=0$, 
where $Z_j^t= -\dfrac{\partial}{\partial \overline{z}_j}+ Y_j+ \lambda_j $, where $Y_j$ is a smooth vector field and $\lambda_j$ a smooth function given by
\begin{align}\label{eq-y1} Y_j&= - \left(\sum_{\ell=1}^{n-1}\left(a^\ell_j\frac{\partial}{\partial x_\ell}+b^\ell_j\frac{\partial}{\partial y_\ell}\right)+c \frac{\partial}{\partial t}\right)\\
\lambda_j&=-
\left(\sum_{\ell=1}^{n-1}\left(\frac{\partial{a_j^\ell}}{\partial x_\ell}+\frac{\partial{b_j^\ell}}{\partial y_\ell}\right)+\frac{\partial{c}}{\partial t}\right).\label{eq-lambda}\end{align}


Now,  assuming that the support of $\phi$ is contained in the cube $\{ \abs{x_j}<\delta, \abs{y_j}<\delta, \abs{t}<\delta\}$, we have using Fubini\rq{}s theorem
(the hat  $\widehat{\cdot}$  in the formulas below indicates that this factor is absent from the product):
\begin{align*}
\int_{t>0}{ \frac{\partial \phi}{\partial\overline{ z}_j}}
& = \int\left( \int_{\abs{x_j}<\delta, \abs{y_j}<\delta} \frac{\partial{ \phi}}{\partial\overline{ z}_j}dx_jdy_j\right) dx_1dy_1\ldots\widehat{dx_j}\widehat{dy_j}\ldots dx_{n-1}dy_{n-1}dt\\
&= \int_{t=0}^\delta 0 \cdot dx_1dy_1\ldots\widehat{dx_j}\widehat{dy_j}\ldots dx_{n-1}dy_{n-1}dt\\
&=0,
\end{align*}
where the inner integral in the repeated integral vanishes  by an application of the divergence formula.
To complete the proof we need to show that $\int_{t>0} (Y_j +\lambda_j)\phi=0$.
Now let $\psi$ be a compactly supported smooth function  on the real line which is identically 1 in a neighborhood of 0, and  for small $\epsilon>0$, let $\phi_\epsilon(x_1,y_1,\ldots,x_{n-1},y_{n-1},t)= \phi(x_1,y_1,\ldots,x_{n-1},y_{n-1},t)\psi(\frac{t}{\epsilon})$. Writing
$\phi = \phi_\epsilon + (\phi-\phi_\epsilon)$, we see that $\int_{t>0} (Y_j+\lambda_j)  (\phi-\phi_\epsilon)=0$, since $f$ is a smooth CR function outside $H$. 
The integral $\int_{t>0}\lambda_j \phi_\epsilon$ is clearly $O(\epsilon)$ as $\epsilon\to 0$, since the function $\lambda_j\phi_\epsilon$   is bounded uniformly in $\epsilon$, and the integral ranges over a
subset of the support of $\phi_\epsilon$, which has volume $O(\epsilon)$.  Noting that the coefficients $a_j^\ell, b_j^\ell,c$ vanish along $H$, we see that they are $O(\epsilon)$ in the support of $\phi_\epsilon$. Further, $\abs{\dfrac{\partial}{\partial t}(\phi_\epsilon)}=O\displaystyle{\left(\frac{1}{\epsilon}\right)}$.
Therefore, 
\begin{align*}
\abs{\int_{t>0}{Y_j\phi_\epsilon}}&\leq \sup\abs{\sum_{\ell=1}^{n-1}\left(a^\ell_j\frac{\partial\phi_\epsilon}{\partial x_\ell}+{b^\ell_j}\frac{\partial\phi_\epsilon}{\partial y_\ell} \right)+{c}\frac{\partial\phi_\epsilon}{\partial t}}
\cdot{\rm vol}({\rm support}\phi_\epsilon)
\\
&\leq C(\epsilon +\epsilon + \epsilon\cdot \frac{1}{\epsilon})\cdot \epsilon\\
&= O(\epsilon).
\end{align*}

Combining the estimates, it follows that for each $\epsilon>0$, we have  that 
\begin{align*}
\abs{(Z_jf)(\phi)} &= \abs{\int_{t>0}Z_j^t \phi}\\
&= \abs{ \int_{t>0} -\dfrac{\partial\phi}{\partial \overline{z}_j}+ Y_j\phi+ \lambda_j\phi}\\
& \leq C\epsilon,
\end{align*}
so that $Z_jf=0$ for $1\leq j \leq n-1$, i.e., $f$ is CR in the sense of distributions.

We now claim that  for $0<s<\dfrac{1}{2}$, 
the CR function $f$ belongs to the fractional-order Sobolev space $W^{s}(b\Omega)$.
Let $B$ denote the unit ball of $\rl^{2n-1}$ and let $B^\pm$ denote the upper and the lower half balls. To see that $f\in W^s(b\Omega)$, 
it clearly suffices to show $u\in W^s(B)$, where $u$ is the function on $B$ which is identically 1 on $B^+$ and which vanishes on $B^-$. The fact that 
$u\in W^s(B)$  for $0<s<\dfrac{1}{2}$ can be verified by a direct computation, cf.  \cite[Proposition~5.3]{tay}, or it follows from 
 \cite[Theorem~11.4]{lima1}, where it is shown that the extension by 0 of a function in $W^s(B^+)$  is a function in $W^s(B)$ if $0<s<\frac{1}{2}$.

Note that the function $f$ does not admit a holomorphic extension to either $\Omega$ or $X\setminus\Omega$. If such an extension $F$
did exist, by standard estimates on  Bochner-Martinelli type singular integrals,  $F$ would be $\mathcal{C}^\infty$-smooth 
up to the boundary on $b\Omega\setminus H$. Then
 by a classical boundary uniqueness
result, the holomorphic function $F$ has to be  simultaneously both identically 1 and identically 0 on $\Omega$.  This produces the required contradiction. 
To complete the proof we only need to establish the claim made in the first paragraph regarding the existence of an $\epsilon$ with $0<\epsilon<\dfrac{1}{2}$ for which each CR function of class $W^\epsilon(b\Omega)$ extends holomorphically 
into $\Omega$.

By assumption, the $\dbar$-operator has closed range in $L^2_{n,n-1}(\Omega)$. Then the  $\dbar$-Neumann operator
$\mathsf{N}=\mathsf{N}_{n,n-1}$ exists on $\Omega$ as a bounded operator on $L^2_{n,n-1}(\Omega)$, since in the top degree,
$\dbar$ automatically has closed range from $L^2_{n,n-1}(\Omega)$ to $L^2_{n,n}(\Omega)$.  Further, by $L^2$-Serre duality (cf. \cite[Theorem~2]{l2serre}) it follows that $H^{n,n-1}_{L^2}(\Omega)=0$, and further that the harmonic space $\mathcal{H}^{n,n-1}(\Omega)=0$ (cf. \cite{l2serre}).
Using an observation of Kohn (see Proposition~\ref{prop-kohn} below),  there 
is an $0<s<\dfrac{1}{2}$ such that 
the operator $\mathsf{N}$ extends as a bounded operator on $W^s_{n,n-1}(\Omega)$, i.e., the space of  $(n,n-1)$-forms on $\Omega$ with 
coefficients in $W^s(\Omega)$. When $\mathsf{N}$ maps $W^s$ to itself, it follows that the ``canonical solution operator''{} $\dbar^*\mathsf{N}$
and all the related operators ($\dbar \mathsf{N}$ and the Bergman projection)
also map $W^s$ to itself. (For integral $s$, a proof based on a standard commutator estimate may be found in  texts on the $\dbar$-Neumann problem, see
\cite[Theorem~6.1.4 and Theorem~6.2.2]{chen-shaw} or \cite[Lemma~3.2 and Corollary~3.3]{straube}.  Essentially the same arguments generalize to
 fractional  $s$, even with $0<s<1$, if we replace the tangential differential operators in the commutator estimates by tangential pseudo-differential operators
of fractional order and use the commutator estimates for such operators, see \cite[Appendix, Prop.~A.5.1 to A.5.3]{fk}). Therefore $\dbar \mathsf{N}$  
and  $\dbar^* \mathsf{N}$  map  $W^s(\Omega)$ to itself. If $0<s<\frac{1}{2}$, the space  $\mathcal{C}^\infty_0(\Omega)$ is dense in $W^s(\Omega)$, so that $W^s(\Omega)$ 
coincides with $W^s_0(\Omega)$ (cf. \cite[Theorem~11.1]{lima1}), and therefore $W^{-s}(\Omega)$, defined to be the dual of $W^s_0(\Omega)$, is 
actually the dual of $W^s(\Omega)$. Since $\mathsf{N}$ is self-adjoint, it maps $W^{-s}(\Omega)$ to itself.   It  follows that the  associated operator   $\dbar\mathsf{N}$ (or  $\ \dbar^*\mathsf{N} $)  maps $W^{-s}(\Omega)$ to itself.

  Now let $f$ be a CR function on $b\Omega$ belonging to $W^\epsilon(b\Omega)$
where $\epsilon= \frac{1}{2}-s$. We can extend $f$ to a function $\tilde{f}\in W^{\frac{1}{2}+\epsilon}(\Omega)$. Then $\dbar \tilde{f}\in W^{\epsilon-\frac{1}{2}}(\Omega)= W^{-s}(\Omega)$. We consider the function
\[ u_c = - \ast \dbar \mathsf{N} \left(\ast \dbar \tilde{f}\right),\]
where $\ast$ denotes the Hodge Star operator on the space of differential forms on the  Hermitian manifold $\Omega$. Since $\ast$ induces an isometry of each Sobolev space, 
  it follows that $u_c \in W^{-s}(\Omega)$. It follows from \cite[Theorem~3]{l2serre}, that $u_c$ satisfies $\dbar_c u_c=\dbar \tilde{f}$, where $\dbar_c$ is the minimal realization of the $\dbar$ operator (see \cite{l2serre} or     the proof of Theorem 3.2 in  \cite{Sh}).
   Furthermore, using the regularity of $\dbar \mathsf{N}$ on $W^{-s}(\Omega)$ established above, we have
\[ \norm{u_c}_{W^{-s}(\Omega)}\leq C \norm{\dbar \tilde{f}}_{W^{-s}(\Omega)} \leq C \norm{\tilde{f}}_{W^{1-s}(\Omega)}\leq C\norm{f}_{W^{\frac{1}{2}-s}(b\Omega)}.\]
If we define
\[ F=\tilde{f}-u_c,\]
then $F\in W^{-s}(\Omega)$ and $F$ is holomorphic. Using the weak and strong extension of $\dbar_c$   (cf. \cite[Lemma ~2.4]{LS}), 
  we see that 
$u_c$ satisfies the equation $\dbar u_c = \dbar \tilde f$ in an open  neighbourhood of $\overline \Omega$ if we set $u_c$ and $\dbar \tilde f$ equal to zero outside $\Omega$.  From the interior regularity for $\dbar$, we have that $u_c$ is in $W^{1-s}(\overline \Omega)$.  This shows that $u_c$ actually has trace in $W^{\frac12-s}(b\Omega)$. The trace of $u_c$ on $b\Omega$    is equal to zero since $u_c$ satisfies the $\dbar$-Cauchy problem.    Noting that the distributional boundary value of $F$ on $b\Omega$ is of class $W^{\frac 12-s}(b\Omega)$,
it follows that $F\in W^{1-s}(\Omega)$, so that $F$   is indeed a holomorphic extension of $f$.   
\end{proof}
To complete the argument in the previous result, we will use the following observation, which is  due to Kohn (see \cite{BS2}). 
\begin{proposition}\label{prop-kohn}Let $\Omega$  be a relatively compact domain with smooth boundary  in a Hermitian manifold $X$. Suppose that for some degree $(p,q)$,  the $\dbar$-Neumann
operator $\mathsf{N}_{p,q}$ exists on $\Omega$ as a bounded linear operator on the space $L^2_{p,q}(\Omega)$ of square integrable $(p,q)$-forms, and suppose that the harmonic space of degree $(p,q)$ is trivial. Then 
there is an $\epsilon_0$, with $0<\epsilon_0\leq \frac{1}{2}$, such that for $0<\epsilon<\epsilon_0$, the operator $\mathsf{N}_{p,q}$ extends as a bounded operator from the Sobolev space
 $W^{\epsilon}_{(p,q)}(\Omega)$ to itself.
\end{proposition}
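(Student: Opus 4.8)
The plan is to reduce the proposition to an \emph{a priori} estimate for the complex Laplacian $\Box=\dbar\dbar^*+\dbar^*\dbar$ in degree $(p,q)$, and then to prove that estimate by Kohn's tangential pseudodifferential operator technique, using elliptic regularization to pass from the a priori estimate to actual regularity. The relevant computation in the integer‑order case is in \cite[Ch.~6]{chen-shaw} and \cite[\S3]{straube}; the fractional‑order version, which is what is needed here, is carried out in \cite{BS2}, using the pseudodifferential calculus of \cite[Appendix]{fk}.

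First I would translate the hypotheses. With $Q(\varphi,\psi)=(\dbar\varphi,\dbar\psi)+(\dbar^*\varphi,\dbar^*\psi)$ the energy form in degree $(p,q)$, boundedness of $\mathsf{N}_{p,q}$ on $L^2_{p,q}(\Omega)$ together with triviality of the harmonic space in degree $(p,q)$ is equivalent to the \emph{basic estimate}
\[\norm{\varphi}^2\le C\,Q(\varphi,\varphi)\qquad\text{for all }\varphi\in\dom(\dbar)\cap\dom(\dbar^*)\text{ in degree }(p,q),\]
while $\mathsf{N}\alpha$ is the unique $u\in\dom(\Box)$ with $\Box u=\alpha$. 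It therefore suffices to find $\epsilon_0\in(0,\tfrac12]$ so that for $0<\epsilon<\epsilon_0$ one has
\[\norm{u}_\epsilon\le C_\epsilon\bigl(\norm{\Box u}_\epsilon+\norm{u}\bigr)\]
for all $u\in\dom(\Box)$ with $\mathcal C^\infty$ coefficients; combined with $\norm{u}=\norm{\mathsf{N}\alpha}\le\norm{\mathsf{N}}\,\norm{\alpha}$, this gives $\norm{\mathsf{N}\alpha}_\epsilon\le C\norm{\alpha}_\epsilon$ for such $\alpha$, and (see the last paragraph) elliptic regularization then upgrades it to all $\alpha\in W^\epsilon_{(p,q)}(\Omega)$.

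To prove the a priori inequality I would localize with a partition of unity. On interior patches $\Box$ is a second‑order elliptic system, so the estimate is classical (with a gain of two derivatives), and everything reduces to a boundary patch in special boundary coordinates $(t_1,\dots,t_{2n-1},r)$, $r$ a defining function, where one uses a tangential pseudodifferential operator $\Lambda^\epsilon$ of order $\epsilon$. The key step — and, I expect, the main obstacle — is the tangential estimate. The subtlety is that $\Lambda^\epsilon u$ need not satisfy the free boundary condition defining $\dom(\dbar^*)$; but since $\Lambda^\epsilon$ differentiates only tangentially, this defect is of strictly lower order and can be corrected. Applying the basic estimate to the corrected form and commuting one factor of $\Lambda^\epsilon$ past $\dbar$ and $\dbar^*$ bounds the tangential norm $\Norm{u}_\epsilon^2$ over the patch by $\abs{(\Box u,\Lambda^{2\epsilon}u)}\le\norm{\Box u}_\epsilon\,\norm{u}_\epsilon$ plus commutator terms which, by the fractional‑order pseudodifferential commutator estimates of \cite[Appendix, Prop.~A.5.1--A.5.3]{fk}, are dominated by $\eta\,\Norm{u}_\epsilon^2+C_\eta\bigl(\norm{\Box u}_\epsilon^2+\norm{u}^2\bigr)$ once $\epsilon$ and $\eta$ are small; absorbing $\eta\,\Norm{u}_\epsilon^2$ into the left side yields $\Norm{u}_\epsilon^2\lesssim\norm{\Box u}_\epsilon^2+\norm{u}^2$. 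Finally I would recover the normal derivatives: $\Box$ is second order and the conormal $dr$ is non‑characteristic, so $\Box u=\alpha$ solves algebraically for $\partial_r^2u$ in terms of $\alpha$ and derivatives of $u$ involving at most one normal differentiation, and an induction on the number of normal derivatives upgrades the tangential estimate to $\norm{u}_\epsilon\lesssim\norm{\Box u}_\epsilon+\norm{u}$. One keeps $0<\epsilon<\tfrac12$ throughout, e.g.\ so that $W^\epsilon(\Omega)=W^\epsilon_0(\Omega)$ and no boundary‑trace obstruction intervenes; this is the source of the bound $\epsilon_0\le\tfrac12$.

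It remains to upgrade the a priori inequality to genuine regularity of $\mathsf{N}\alpha$, for which I would run the entire argument with $\Box$ replaced by the elliptically regularized operator $\Box_\delta$ attached to the coercive form $Q_\delta(\varphi,\psi)=Q(\varphi,\psi)+\delta\langle\varphi,\psi\rangle_{W^1(\Omega)}$: its inverse $\mathsf{N}_\delta$ is smoothing, the basic estimate holds for $Q_\delta$ with the same constant since $Q_\delta\ge Q$, so the estimates above hold for $\mathsf{N}_\delta$ uniformly in $\delta\in(0,1]$; since $\mathsf{N}_\delta\alpha\to\mathsf{N}\alpha$ as $\delta\to0^+$, the bound $\norm{\mathsf{N}_\delta\alpha}_\epsilon\le C\norm{\alpha}_\epsilon$ passes to the limit and gives $\mathsf{N}_{p,q}\colon W^\epsilon_{(p,q)}(\Omega)\to W^\epsilon_{(p,q)}(\Omega)$ bounded for $0<\epsilon<\epsilon_0$. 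As noted, the hard part is the tangential estimate: one must verify carefully that applying $\Lambda^\epsilon$ perturbs the $\dbar$‑Neumann boundary condition only by a lower‑order term, and that every commutator produced when commuting $\Lambda^\epsilon$ past $\dbar$ and $\dbar^*$ is absorbable for $\epsilon$ small — which is exactly where the fractional pseudodifferential calculus of \cite[Appendix]{fk} enters and where the restriction to small $\epsilon$ is forced.
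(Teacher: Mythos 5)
Your proposal is correct and coincides with the paper's own (omitted) proof: the paper simply states that the argument is the one in Kohn--Nirenberg \cite{koni2} carried out with the tangential pseudodifferential calculus of \cite{koni65}, which is exactly what you supply --- translating the hypotheses into the basic estimate, applying a tangential operator $\Lambda^{\epsilon}$ whose commutators with $\dbar$ and $\dbar^*$ carry a small constant absorbable for small $\epsilon$, and passing from the a priori inequality to genuine regularity by elliptic regularization. Your identification of $\epsilon_0\leq\tfrac12$ with the range where $W^{\epsilon}(\Omega)=W^{\epsilon}_0(\Omega)$, so that the boundary condition causes no obstruction, is also consistent with how the paper uses the proposition.
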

If $\Omega$   admits a bounded H\"{o}lder continuous plurisubharmonic exhaustion 
(which happens when $X=\cx^n$ \cite{difo} or $X$ is a complex projective space \cite{OS,CSW}) then the conclusion follows directly. However, 
the existence of such a bounded plurisubharmonic exhaustion is not necessary for the conclusion to hold. In our application,
we will use it on  a domain which does not admit a bounded plurisubharmonic exhaustion (see Lemma~\ref{lem-3}).  The proof of the proposition is the same as the arguments used in Kohn-Nirenberg \cite{koni2} using pseudo-differential operators (see \cite{koni65}) and we omit the details. 


\section{The   proof of Theorem~\ref{thm-main}}
\label{sec-ohsawa}
\subsection{The domain $\oh$}As was already mentioned in the introduction, the domain $\oh$ will be realized as a domain with real-analytic boundary in the  compact complex 
surface $X=\p^1\times \e$, where $\p^1$ is the complex projective line (the Riemann Sphere) and $\e$ is an elliptic curve defined in the following way.
Let $\beta>0$, and let $\Gamma_\beta=\{e^{2k\pi\beta} | k\in \mathbb{Z}\}$ be the cyclic subgroup of $\cx^*=\cx\setminus\{0\}$ generated by the number
$e^{2\pi\beta}$. Then $\e$ is the quotient
$\cx^*/\Gamma_\beta$ which  is an elliptic curve.

Let the natural projection $\cx^*\to \e$ be denoted by $w\mapsto [w]$ (so that $[e^{2\pi\beta}w]=[w]$) and let  $z:\p\to\cx\cup\{\infty\}$ denote the inhomogeneous coordinate on $\p$.
Then $\oh$ is the domain in  $\p\times \e$ given  by
\begin{equation}\label{eq-ohdefn}
\oh = \left\{(z,[w])\in \p\times\e \colon \Re(zw)>0\right\},
\end{equation}
where it is easily seen that the condition $\Re(zw)>0$ is well defined independently of the choice of the lift $w$ of the point $[w]\in \e$.
This is a smoothy bounded Levi-flat domain in $X=\p\times\e$, and is in fact biholomorphic to a product domain in $\cx^2$,
where one factor is $\cx^*$ and the other is an annulus. Indeed, let 
\begin{equation}\label{eq-annulus}
\mathbb{A} =\left\{W\in \cx \colon e^{-\frac{\pi}{2\beta}}<\abs{W}<e^{\frac{\pi}{2\beta}}\right \},
\end{equation}
which is an annulus in the plane. Let $\Phi$ be the map from $\cx^*\times \mathbb{A}$  to $X$ given by
\begin{equation}\label{eq-Phi} \Phi(Z,W)= \left(Z, \left[Z^{-1}\exp\left(i\beta\cdot \log W\right)\right]\right)= \left(Z, \left[Z^{-1}\cdot W^{i\beta}\right]\right).\end{equation}
$\Phi$ is well-defined in spite of the multivaluedness of the logarithm.  It is not difficult to verify that 
 $\Phi$ is a biholomorphism from $\cx^*\times \mathbb{A}$ onto $\oh$. We
will refer to $\Omega=\cx^*\times \mathbb{A}$ as the {\em product model} of $\oh$. 

Note that the domain $\oh$ depends on the choice of the parameter $\beta$, and the domains obtained for distinct $\beta$ are easily seen to be non-biholomorphic. Therefore, in fact we have a one-parameter family of counterexamples
to prove Theorem~\ref{thm-main}. In the sequel, we consider the Ohsawa domain corresponding to one fixed $\beta$.

 The Levi structure of $b\oh$ can be summarized as follows:
\begin{proposition} \label{prop-levistructure}
$b\oh$ is a smooth, real-analytic, connected, Levi-flat hypersurface. The complex tori $\{0\}\times \e$ and $\{\infty\}\times \e$ are contained in $b\oh$ and the complement of these two tori
is a disjoint union of two open subsets $\Sigma^\pm$ of $b\oh$. Each of $\Sigma^\pm$ is CR equivalent to  the product $\cx^*\times S^1$  (with the natural CR structure).
\end{proposition}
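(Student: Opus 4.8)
The plan is to read off every assertion directly from the explicit formula $\oh=\{(z,[w])\in\p\times\e:\Re(zw)>0\}$ of \eqref{eq-ohdefn}, working in the two standard coordinate charts of $\p$. \emph{Smoothness, real-analyticity and Levi-flatness.} On the chart $\{z\neq\infty\}=\cx\times\e$ the function $\rho=-\Re(zw)$ is a local defining function for $\oh$, and since $w\in\cx^{*}$ never vanishes we have $\partial\rho=-\tfrac12(w\,dz+z\,dw)\neq0$; hence $d\rho\neq0$ on $b\oh$, so $b\oh\cap\{z\neq\infty\}$ is a smooth real-analytic hypersurface. As $zw$ is holomorphic, $\rho$ is pluriharmonic, so $i\partial\dbar\rho=0$ and in particular the Levi form of $b\oh$ vanishes at every point with $z\neq\infty$. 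Near $z=\infty$ write $z'=1/z$; for $z'\neq0$ the condition $\Re(zw)>0$ is equivalent to $\Re(w\bar z')>0$, and $\rho'=-\Re(w\bar z')=-\tfrac12(w\bar z'+\bar w z')$ is a defining function whose differential is nonzero on $\{z'=0\}$ because $w\neq0$, so $b\oh$ is smooth and real-analytic near $\{\infty\}\times\e$ as well. Since the Levi form is continuous and vanishes on the dense open set $\{z\neq\infty\}\subset b\oh$ — equivalently, since the torus $\{\infty\}\times\e=\{z'=0\}$ is a complex curve contained in $b\oh$ — the hypersurface $b\oh$ is Levi-flat.

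\emph{The partition and connectedness.} At $z=0$ one has $\Re(zw)=0$ for every $w$, so $\{0\}\times\e\subset b\oh$; at $z=\infty$ one has $z'=0$, so $\{\infty\}\times\e\subset b\oh$. If $(z,[w])\in b\oh$ with $z\neq0,\infty$, then $\Re(zw)=0$ forces $zw\in i\rl\setminus\{0\}$, so exactly one of $\Im(zw)>0$ and $\Im(zw)<0$ holds; put $\Sigma^{\pm}=\{(z,[w])\in b\oh:z\neq0,\infty,\ \pm\Im(zw)>0\}$, which is independent of the lift $w$ since $w\mapsto e^{2\pi\beta k}w$ only multiplies $zw$ by a positive number. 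These are disjoint open subsets of $b\oh$ whose union is exactly $b\oh\setminus((\{0\}\times\e)\cup(\{\infty\}\times\e))$. Each $\Sigma^{\pm}$ is connected, being the image of the connected set $\cx^{*}\times\rl_{>0}$ under the continuous surjection $(z,t)\mapsto(z,[\pm it/z])$. Finally, for any $[w_{0}]\in\e$ with lift $w_{0}$, the point $(it/w_{0},[w_{0}])$ lies in $\Sigma^{+}$ for $t>0$ and tends to $(0,[w_{0}])$ as $t\to0^{+}$ and to $(\infty,[w_{0}])$ as $t\to+\infty$; hence $\{0\}\times\e$ and $\{\infty\}\times\e$ both lie in $\overline{\Sigma^{+}}$, and symmetrically in $\overline{\Sigma^{-}}$, so $b\oh$ is connected.

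\emph{The CR structure of $\Sigma^{\pm}$.} For each $t>0$ the map $z\mapsto(z,[it/z])$ is a holomorphic embedding of $\cx^{*}$ into $X$; its image $L_{t}$ is a complex curve contained in $\Sigma^{+}$, and the family $\{L_{t}:t>0\}$ partitions $\Sigma^{+}$ (any $(z_{0},[w_{0}])\in\Sigma^{+}$ has a lift with $z_{0}w_{0}=it_{0}$, $t_{0}>0$, so it lies on $L_{t_{0}}$). Because $b\oh$ is a real hypersurface in a complex surface, $T^{1,0}\Sigma^{+}$ has complex rank one, so it agrees at each point of $L_{t}$ with the one-dimensional complex tangent space of $L_{t}$. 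Now let $\Psi\colon\Sigma^{+}\to\cx^{*}\times(\rl/2\pi\beta\mathbb{Z})$ be defined by $\Psi(z,[w])=(z,\ \log t\bmod2\pi\beta)$, where $zw=it$ with $t>0$; this is well defined (a change of lift of $w$ by $e^{2\pi\beta k}$ shifts $\log t$ by $2\pi\beta k$), with smooth inverse $(z,\tau)\mapsto(z,[ie^{\tau}/z])$, hence a diffeomorphism onto $\cx^{*}\times S^{1}$. Since $\Psi$ carries each leaf $L_{t}$ onto $\cx^{*}\times\{\log t\bmod2\pi\beta\}$ by the identity $z\mapsto z$, its differential maps $T^{1,0}\Sigma^{+}$ isomorphically onto the holomorphic tangent bundle of the product $\cx^{*}\times S^{1}$; thus $\Psi$ is a CR diffeomorphism. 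For $\Sigma^{-}$ one precomposes with the biholomorphic involution $\sigma(z,[w])=(z,[-w])$ of $X$, which preserves $b\oh$ and sends $\Sigma^{+}$ onto $\Sigma^{-}$.

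I do not expect any genuine obstacle in this proposition: the geometric heart is simply that $-\Re(zw)$ is a pluriharmonic defining function and that the slices $\{zw\equiv it_{0}\}$ are images of the \emph{holomorphic} maps $z\mapsto(z,[it_{0}/z])$. The only thing requiring real care is bookkeeping — passing to the coordinate $z'=1/z$ so that smoothness and Levi-flatness are seen to persist across $\{z=\infty\}$, and keeping careful track of the $\Gamma_{\beta}$-ambiguity so that the second factor of $\Sigma^{\pm}$ emerges as the circle $\rl/2\pi\beta\mathbb{Z}$ rather than a line.
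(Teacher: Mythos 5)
Your proof is correct, but it takes a more hands-on route than the paper for the second and third assertions. The paper disposes of smoothness, real-analyticity, connectedness and Levi-flatness by ``direct computation'' (essentially your first two paragraphs, which supply the details the paper omits, including the passage to the chart $z'=1/z$ and the continuity argument for the Levi form at $\{\infty\}\times\e$, where the defining function is no longer pluriharmonic). For the decomposition of $b\oh\setminus H$ and the CR structure of the pieces, however, the paper does not work with the sign of $\Im(zw)$ at all: it observes that the product biholomorphism $\Phi:\cx^*\times\mathbb{A}\to\oh$ of \eqref{eq-Phi} extends biholomorphically to a neighborhood of $\cx^*\times\overline{\mathbb{A}}$, so that $\Sigma^\pm:=\Phi(\cx^*\times b\mathbb{A}^\pm)$ immediately exhausts $b\oh$ minus the two tori and is CR equivalent to $\cx^*\times S^1$ because $\Phi$ is a biholomorphism of ambient neighborhoods. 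That argument buys the CR equivalence for free but hides the leaf structure; your construction of the pieces via $\pm\Im(zw)>0$, the explicit foliation by the holomorphic curves $z\mapsto(z,[it/z])$, and the map $\Psi(z,[w])=(z,\log t\bmod 2\pi\beta)$ recovers exactly the same identification (one can check that $\log t$ corresponds to $-\beta\arg W$ on $b\mathbb{A}^+$), at the cost of having to verify by hand that $\Psi$ respects $T^{1,0}$ — which you do correctly using that a rank-one CR bundle on a hypersurface in a surface must coincide with the tangent to any complex curve it contains. The involution $(z,[w])\mapsto(z,[-w])$ handling $\Sigma^-$ is also fine (it preserves $b\oh$, swapping $\oh$ with its complement and $\Sigma^+$ with $\Sigma^-$). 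No gaps.
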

\begin{proof} The assertions in the first sentence can be verified by direct computation, starting from the representation \eqref{eq-ohdefn}.

The map $\Phi$  extends biholomorphically to a neighborhood of $\cx^*\times \overline{\mathbb{A}}$, and it is easy to see that the  image of $\cx^*\times b{\mathbb{A}}$ is all of 
$b\oh$ except the two tori $\{0\}\times \e$ and $\{\infty\}\times \e$. Note   that the boundary of the annulus $\mathbb{A}$ consists
of two circles $b\mathbb{A}^+$ and $b\mathbb{A}^-$. Define $\Sigma^+= \Phi(\cx^*\times b\mathbb{A}^+ )$ and $\Sigma^-= \Phi(\cx^*\times b\mathbb{A}^- )$. 
Then $b\oh$ is the disjoint union of the four pieces  $\Sigma^+$, $\Sigma^-$,  $\{0\}\times \e$ and $\{\infty\}\times \e$. By construction 
 each of $\Sigma^\pm$ is a Levi-flat hypersurface 
biholomorphically equivalent to $\cx^*\times S^1$ (where $S^1$ is the circle).
\end{proof}

If we use $(Z,W)$ as coordinates on $\oh$,
each function  holomorphic on $\oh$ has a Laurent expansion
\begin{equation}\label{eq-laurent} \sum_{(j,k)\in \mathbb{Z}^2} a_{jk}Z^j W^k. \end{equation}

Viewed  as a complex manifold, being a product of planar domains,  $\oh$ is Stein, and
the Dolbeault Cohomology  $H^{p,q}(\oh)$ vanishes for each positive $q$.  Note further there are nonconstant bounded holomorphic
functions on $\oh$, namely, the ones represented in the product model as bounded holomorphic functions of $W$ alone.  

In order to study $L^2$ theory on $\oh$, we need to impose an arbitrary Hermitian metric on $X$ and restrict it to $\oh$.
The actual $L^2$-spaces of forms and functions, the realizations of the $\dbar$-operator are independent of the choice of the metric.
For simplicity therefore we give the most symmetric metric to $X$, which arises as the product metric of constant curvature metrics on the factors.
 We endow $\p$ (which is diffeomorphic
to the round 2-sphere)  with a round metric (the Fubini-Study metric)  and the elliptic curve $\e$ with a flat metric.  The metric on $\p$  is normalized such that 
\begin{equation}\label{eq-varphi} \phi= \frac{dz}{1+\abs{z}^2}, \end{equation}
is a $(1,0)$-form of unit length at each point. Denote by $\psi$ the unique $(1,0)$-form on $\e$ whose pullback to $\cx^*$ by the map $\pi:w\mapsto [w]$
satisfies
\begin{equation}\label{eq-psi} \pi^*\psi= \frac{dw}{w}.\end{equation}
Such a $\psi$ exists since the form $\dfrac{dw}{w}$ is periodic with respect to the action of the group $\Gamma_\beta$ by multiplication on $\cx^*$.
Then $\psi$ is in fact a holomorphic 1-form
on $\e$, and by declaring it to be of unit length we get a flat Hermitian metric on $\e$. Therefore, the  metric on $\oh$  induced from $\p\times\e$
is represented as
\begin{equation}\label{eq-metric} ds^2=\phi\tensor \overline{\phi}+\psi\tensor\overline\psi,\end{equation}
where, by standard abuse of notation, we denote also by $\phi$ and $\psi$ the pullbacks 
of these forms from $\p$ and $\e$ respectively to the product $\p\times\e$ via  projections on the factors.

We can pull back the metric \eqref{eq-metric} on $\oh$ via the map $\Phi$ and obtain a metric $\Phi^*(ds^2)$ on the product model $\Omega=\cx^*\times \mathbb{A}$,
so that with these metrics, $\Phi$ becomes an isometry.
By a direct computation we can verify that the Riemannian volume form on $\Omega$ associated to the pullback metric  $\Phi^*(ds^2)$ is represented as:
\begin{equation}\label{eq-volform} \omega = \frac{\beta^2}{\left(1+\abs{Z}^2\right)^2\abs{W}^2}dV\end{equation}
where $dV=(-2i)^{-2}dZ\wedge d\overline{Z}\wedge dW\wedge d \overline{W}$ represents the Euclidean volume form on $\cx^2$. 
 We characterize some Sobolev spaces of holomorphic functions: 
\begin{lemma}\label{lem-oo}
(1) The Bergman space $\mathcal{O}(\oh)\cap L^2(\oh)$ consists of functions which are represented in the product model $\cx^*\times \mathbb{A}$ as functions on $\mathbb{A}$ alone, with no dependence on 
$\cx^*$.\\
(2) The Holomorphic Sobolev space $\mathcal{O}(\oh)\cap W^1(\oh)$ consists of only the  constant functions. \\
(3) The above statements remain true if $\oh$ is replaced by $X\setminus \overline \oh$.
\end{lemma}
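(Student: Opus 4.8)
The plan is to pass to the product model via the isometry $\Phi$ of \eqref{eq-Phi}, so that the task becomes: describe the holomorphic functions on $\Omega=\cx^*\times\mathbb A$ that lie in $L^2(\Omega,\omega)$, respectively in $W^1(\Omega,\omega)$, for the volume form $\omega$ of \eqref{eq-volform}. The essential structural observation I would record first is that $\Omega$ is a Reinhardt domain in $\cx^2$ and $\omega$ is invariant under the torus action $(Z,W)\mapsto(e^{i\sigma}Z,e^{i\tau}W)$; hence the Laurent monomials $Z^jW^k$, $(j,k)\in\mathbb Z^2$, are mutually orthogonal in $L^2(\Omega,\omega)$, and for $f\in\mathcal O(\Omega)$ with Laurent expansion \eqref{eq-laurent} one has
\[
\|f\|^2_{L^2(\Omega,\omega)}=\sum_{(j,k)\in\mathbb Z^2}|a_{jk}|^2\,\|Z^jW^k\|^2_{L^2(\Omega,\omega)},
\]
with the convention that the left-hand side is $+\infty$ as soon as some $a_{jk}$ with $\|Z^jW^k\|_{L^2(\Omega,\omega)}=\infty$ is nonzero. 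This identity I would get by exhausting $\Omega$ by relatively compact Reinhardt subdomains on which the Laurent series converges uniformly and then letting the exhaustion fill out $\Omega$, using monotone convergence term by term. Because $\omega$ is a product measure --- indeed $dV=dA(Z)\,dA(W)$ in \eqref{eq-volform} --- the monomial norms factor as $\|Z^jW^k\|^2=c_j\,d_k$ with $c_j=\beta^2\!\int_{\cx^*}|Z|^{2j}(1+|Z|^2)^{-2}\,dA(Z)$ and $d_k=\int_{\mathbb A}|W|^{2k-2}\,dA(W)$.

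For part (1) I would just evaluate these. Since $\mathbb A$ is a compact annulus bounded away from $0$ and $\infty$, $d_k$ is finite and positive for every $k$; while $c_j=2\pi\beta^2\!\int_0^\infty r^{2j+1}(1+r^2)^{-2}\,dr$ converges at $r=0$ only when $j\ge 0$ and at $r=\infty$ only when $j\le 0$, so $c_j<\infty$ precisely when $j=0$. Thus $f\in L^2(\Omega,\omega)$ forces $a_{jk}=0$ for all $j\neq0$, i.e. $f=\sum_k a_{0k}W^k$ depends on $W$ alone; conversely any $L^2$ function of $W$ alone is trivially in the Bergman space (and there are nonconstant ones, for instance the bounded functions of $W$, since $\Omega$ has finite volume). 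This is (1).

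For part (2), let $f\in\mathcal O(\oh)\cap W^1(\oh)$. By (1), $f=f(W)$, and as $f$ is holomorphic, $\dbar f=0$, so the $W^1$ requirement reduces to $\partial f=f'(W)\,dW\in L^2_{1,0}(\Omega,\omega)$. Here I would compute the orthonormal $(1,0)$-coframe in the product model: from \eqref{eq-varphi}, \eqref{eq-psi}, \eqref{eq-Phi}, $\Phi^*\phi=(1+|Z|^2)^{-1}\,dZ$ and $\Phi^*\psi=-Z^{-1}\,dZ+i\beta W^{-1}\,dW$, whence
\[
dW=\frac{W}{i\beta}\,\Phi^*\psi+\frac{W(1+|Z|^2)}{i\beta\,Z}\,\Phi^*\phi .
\]
Therefore the pointwise length satisfies $|\partial f|^2=\beta^{-2}|W|^2|f'(W)|^2+\beta^{-2}|Z|^{-2}|W|^2(1+|Z|^2)^2|f'(W)|^2$, and integrating the second summand against $\omega$ of \eqref{eq-volform} collapses to $\big(\int_{\cx^*}|Z|^{-2}\,dA(Z)\big)\big(\int_{\mathbb A}|f'(W)|^2\,dA(W)\big)$. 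The first factor equals $2\pi\int_0^\infty dr/r=+\infty$, so $\partial f\in L^2$ forces $f'\equiv0$ on $\mathbb A$, i.e. $f$ is constant; constants obviously lie in $W^1(\oh)$, so (2) follows.

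For part (3) I would exhibit a biholomorphic isometry of $\oh$ onto $X\setminus\overline\oh$: the map $R\colon X\to X$, $R(z,[w])=(-z,[w])$, is a biholomorphism that preserves the product metric \eqref{eq-metric} (it acts as a unitary rotation on $\p^1$ and trivially on $\e$), and since $\Re((-z)w)=-\Re(zw)$ it carries $\oh=\{\Re(zw)>0\}$ onto $\{\Re(zw)<0\}=X\setminus\overline\oh$. Consequently $R\circ\Phi$ is a product model realizing $X\setminus\overline\oh$ isometrically as $\cx^*\times\mathbb A$, and the arguments for (1) and (2) carry over verbatim. The only genuinely delicate points in all of this are setting up the orthogonal Laurent expansion with enough care to justify that one nonzero ``bad''\ monomial already destroys square-integrability, and the explicit change from $dZ,dW$ to the orthonormal coframe in part (2) --- it is precisely this that produces the factor $|Z|^{-2}$ and hence the logarithmic divergence of the $Z$-integral; the remaining computations are elementary one-variable integrals.
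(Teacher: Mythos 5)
Your proposal is correct and takes essentially the same route as the paper: orthogonality of the Laurent monomials with respect to the volume form \eqref{eq-volform} reduces (1) to the divergence of $\int_{\cx^*}\abs{Z}^{2j}(1+\abs{Z}^2)^{-2}\,dA$ for $j\neq 0$, the logarithmic-differentiation identity $i\beta\, dW/W=\psi+\frac{1+\abs{z}^2}{z}\phi$ produces the divergent $\int\abs{Z}^{-2}\,dA$ factor for (2), and the isometric biholomorphism $(z,[w])\mapsto(-z,[w])$ gives (3). The only differences are presentational: you justify the orthogonality in more detail and treat a general holomorphic $f(W)$ in (2) where the paper reduces to the monomials $W^k$.
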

\begin{proof}The proofs are by direct  computations. For (1), it suffices to note that in the product representation,  thanks to the structure  of the volume form \eqref{eq-volform}, the terms of the Laurent series
\eqref{eq-laurent} are orthogonal. Consequently, it suffices to prove that terms of the form $Z^jW^k$ are not in $L^2$ if $j\not=0$. But this follows from integration on the product representation $\cx^*\times \mathbb{A}$ using the
volume form \eqref{eq-volform}.

Thanks to part (1) it suffices to show that the functions $W^k$, where $k\not=0$ are not in the Sobolev space $W^1(\oh)$. To show this, it suffices 
to show that the differential $d(W^k)=kW^{k-1}dW$ is not in the space $L^2_{1,0}(\oh)$. 	Since $W^{i\beta}=zw$, differentiating logarithmically, we see that
\begin{align*}
i\beta \frac{dW}{W}&= \frac{dw}{w}+ \frac{dz}{z}\\
&=\psi + \frac{1+\abs{z}^2}{z}\phi,
\end{align*}
where $\psi$ and $\phi$ are as in \eqref{eq-psi} and \eqref{eq-varphi} respectively.
Using $z=Z$ and the pointwise orthogonality of $\psi$ and $\phi$, we have for the pointwise norm:
\[ \beta^2 \frac{\abs{dW}^2}{\abs{W}^2}=1+ \frac{\left(1+\abs{Z}^2\right)^2}{\abs{Z}^2}.\]
Using the volume form \eqref{eq-volform}, we now see that $W^{k-1}dW$ is not square integrable for any $k$, so that it follows that the function $W^k$ is not in the Sobolev space $W^1(\oh)$. Therefore, the only functions 
in $\mathcal{O}(\oh)\cap W^1(\oh)$ are the constants.

The last statement follows since $\oh$ and $X\setminus \overline\oh$ are isometrically biholomorphic by the map $(z,w)\to (-z,w)$.

\end{proof}

The following lemma shows that in spite of the boundedness of the domain $\oh$ in $\p\times \e$, in some respects the function theory on $\oh$ is analogous to that on an unbounded domain:
\begin{lemma}\label{lem-3}   (a)  Bounded plurisubharmonic functions on $ {\oh}$ do not separate points. In particular, $\oh$ does not admit a bounded plurisubharmonic exhaustion function.\\
(b) A continuous plurisubharmonic function on $\overline{\oh}$ is a constant.
\end{lemma}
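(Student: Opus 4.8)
The plan is to pull everything back to the product model $\Omega=\cx^*\times\mathbb{A}$ through the biholomorphism $\Phi$ of \eqref{eq-Phi}, and to exploit that the factor $\cx^*$ carries no nonconstant subharmonic function which is bounded above. The crucial first step will be the sublemma: \emph{if $u$ is plurisubharmonic on $\oh$ and bounded above, then $u\circ\Phi$ depends only on the second variable}, say $u\circ\Phi(Z,W)=v(W)$ for a function $v$ on $\mathbb{A}$. Indeed, for each fixed $W\in\mathbb{A}$ the slice $Z\mapsto u\circ\Phi(Z,W)$ is subharmonic on $\cx^*$ and bounded above; composing with the universal covering $\exp\colon\cx\to\cx^*$ produces a subharmonic function on $\cx$ which is bounded above, hence constant by Liouville's theorem for subharmonic functions, so the slice is constant. (Alternatively, remove the isolated singularity of the slice at $0$ and again invoke Liouville on $\cx$.)

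Granting the sublemma, part (a) will follow at once. Any bounded psh function on $\oh$ is in particular bounded above, so by the sublemma it takes the same value $v(W)$ at the two distinct points $\Phi(Z_1,W)$ and $\Phi(Z_2,W)$ whenever $Z_1\ne Z_2$; thus bounded psh functions do not separate points of $\oh$. Moreover, if $\rho$ were a bounded psh exhaustion function, then $\rho\circ\Phi(Z,W)=v(W)$; picking $c$ with $\inf\rho<c<\sup\rho$ (so that $\rho$ is nonconstant and $\{v<c\}\ne\emptyset$) and $W_0\in\mathbb{A}$ with $v(W_0)<c$, the sublevel set $\{\rho<c\}$ would contain the set $\Phi(\cx^*\times\{W_0\})$, which accumulates on the torus $\{0\}\times\e\subset b\oh$ and hence is not relatively compact in $\oh$---contradicting the exhaustion property.

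For part (b), let $u$ be continuous on the compact set $\overline\oh$ and psh on $\oh$; then $u$ is bounded, and the sublemma gives $u\circ\Phi(Z,W)=v(W)$ on $\cx^*\times\mathbb{A}$. The new ingredient is that, unlike in the product model, the tori $\{0\}\times\e$ and $\{\infty\}\times\e$ lie in $b\oh$ and are accessible from every fiber. Precisely, fix $W_0\in\mathbb{A}$ and $[w_0]\in\e$; choosing a branch of $\log W_0$ and putting $Z_n:=e^{i\beta\log W_0}\,w_0^{-1}\,e^{-2\pi\beta n}$ one checks from \eqref{eq-Phi} that $\Phi(Z_n,W_0)=(Z_n,[w_0])$, while $Z_n\to 0$ because $\beta>0$; hence $\Phi(Z_n,W_0)\to(0,[w_0])\in b\oh$. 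Continuity of $u$ then yields $u(0,[w_0])=\lim_n u(\Phi(Z_n,W_0))=v(W_0)$. Since $W_0\in\mathbb{A}$ and $[w_0]\in\e$ vary independently, the identity $u(0,[w_0])=v(W_0)$ forces both sides to be a single constant; in particular $v$ is constant, so $u$ is constant on $\oh$, and by continuity on all of $\overline\oh$.

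I expect the only real difficulty to be the sublemma---and there, only the appeal to the Liouville theorem for subharmonic functions on $\cx$; the rest is bookkeeping with the explicit map $\Phi$ together with the standard facts that subharmonicity is preserved by holomorphic pullback and that bounded-above isolated singularities of subharmonic functions are removable. The construction of $Z_n$ in part (b) involves the multivaluedness of $W\mapsto W^{i\beta}$, but this causes no trouble since, as noted right after \eqref{eq-Phi}, the point $[\,Z^{-1}W^{i\beta}\,]\in\e$ is independent of the branch.
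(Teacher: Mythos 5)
Your proof is correct and follows essentially the same route as the paper's: constancy of bounded-above plurisubharmonic functions on the $\cx^*$-slices of the product model (the paper asserts this without the Liouville/removable-singularity detail you supply), and for (b) the fact that each slice $\Phi(\cx^*\times\{W_0\})$ accumulates on the whole torus $\{0\}\times\e\subset b\oh$, which forces all the slice constants to coincide. The only difference is that you verify the accumulation explicitly via the sequence $Z_n$ and deduce constancy on the torus rather than asserting it first; this is added detail, not a different method.
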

\begin{proof} For (a), representing $\oh$ as a product, we see that any bounded plurisubharmonic function on $\oh$ must be constant on each slice $\cx^* \times \{W\}$ for each $W\in \mathbb{A}$. 

For (b), note that the restriction of such a continuous plurisubharmonic function to the torus $T_0=\{0\}\times \e$ is a constant.  But it is easy to see that $T_0$ is contained in the closure of each slice $\cx^* \times \{w\}$  of the 
product representation. Consequently, the constant value assumed by the plurisubharmonic function on each slice is the same.
\end{proof}

\noindent{\bf Remark:} The   function $\Re(zw)$  is pluriharmonic  (hence plurisubharmonic), and serves as a defining function for the domain $\oh$ except near the torus $T_\infty=\{\infty\}\times \e$.  
But it is not a global defining function  of $\oh$ since it is not defined near $T_\infty$. 
A related  result on the non-existence of a plurisubharmonic exhaustion function  on a certain Stein domain  with Levi-flat boundary  may be found in  in \cite[Theorem~1.2]{OS}.


\subsection{Proof of Theorem~\ref{thm-main}}
We can now complete the proof of Theorem~\ref{thm-main}.  Let 
\[ H= \left( \{0\}\times \e\right) \cup \left(\{\infty\}\times \e\right).\]
It is easy to see that  $b\oh$ is actually connected, and  
 $b\oh\setminus H$ is not connected, since it is the disjoint union of $\Sigma^+$ and $\Sigma^-$.

To complete the proof of Theorem~\ref{thm-main} we will study the cohomology  group   $H^{0,1}_{c,L^2}(\oh)$. This $L^2$-cohomology group is 
independent of the choice of the metric adopted, so we can simplify our work by choosing the metric \eqref{eq-metric}. It suffices to endow the ambient
manifold $X=\p\times \e$ with a Hermitian metric, which then can be restricted to the domain $\oh$.
We now prove the following:
\begin{lemma} \label{lem-4.4}With the  metric \eqref{eq-metric} (and therefore with any other comparable metric),  suppose that  the range of $\dbar:L^2_{2,0}(\oh)
\to L^2_{2,1}(\oh)$ is closed in $L^2_{2,1}(\oh)$. Then we have
\[ H^{0,1}_{c,L^2}(\oh)= 0\]
\end{lemma}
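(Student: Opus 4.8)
\textbf{Proof proposal for Lemma~\ref{lem-4.4}.}

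The plan is to deduce the vanishing of $H^{0,1}_{c,L^2}(\oh)$ from the closed range of $\dbar$ on $L^2_{2,1}(\oh)$ by means of $L^2$-Serre duality, followed by an explicit identification of the remaining harmonic space. First I would invoke the $L^2$-Serre duality theorem of \cite{l2serre}: since $\oh$ is a relatively compact domain in the compact (hence complete) Hermitian manifold $X$, and since by assumption $\dbar:L^2_{2,0}(\oh)\to L^2_{2,1}(\oh)$ has closed range, the duality gives that $\dbar$ also has closed range on $L^2_{0,1}(\oh)$ with the minimal realization $\dbar_c$, and that there is a conjugate-linear isomorphism between $H^{0,1}_{c,L^2}(\oh)$ and the harmonic space of bidegree $(2,1)$ — equivalently, between $H^{0,1}_{c,L^2}(\oh)$ and $H^{2,1}_{L^2}(\oh)$ via the Hodge star. (Here I am using $n=2$, so $(n,n-1)=(2,1)$ and $(n,n-2)=(2,0)$.) Thus it suffices to show that $H^{2,1}_{L^2}(\oh)=0$, or equivalently that the harmonic space $\mathcal H^{2,1}(\oh)=0$.

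Next, with closed range of $\dbar$ in $L^2_{2,1}(\oh)$ in hand, in the top-adjacent degree the operator $\dbar:L^2_{2,1}(\oh)\to L^2_{2,2}(\oh)$ automatically has closed range (it is the top degree), so the $\dbar$-Neumann operator $\mathsf N_{2,1}$ exists as a bounded operator on $L^2_{2,1}(\oh)$, and $H^{2,1}_{L^2}(\oh)$ is isomorphic to $\mathcal H^{2,1}(\oh)$. So the whole lemma reduces to the claim $\mathcal H^{2,1}(\oh)=0$. By the Hodge star isometry $\ast:\mathcal H^{2,1}(\oh)\to \mathcal H^{0,1}_{c}(\oh)$ (harmonic forms for the minimal complex), and by conjugation, this is again equivalent to showing there are no nonzero $L^2$ harmonic $(0,1)$-forms with the appropriate boundary condition. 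I would instead argue on the $(2,1)$ side directly: a form $g\in\mathcal H^{2,1}(\oh)$ satisfies $\dbar g=0$ and $\dbar^\ast g=0$. Writing $g$ in the product model $\Omega=\cx^\ast\times\mathbb A$ with the volume form \eqref{eq-volform}, one expands $g$ in a Laurent/Fourier series in the $Z$ and $W$ variables and shows, exactly as in the proofs of Lemma~\ref{lem-oo} and Lemma~\ref{lem-3}, that the equations force every Fourier mode to vanish; the key point is that the factor $\abs{W}^{-2}(1+\abs Z^2)^{-2}$ in the volume form makes all the relevant monomial $(2,1)$-forms either non-square-integrable or not simultaneously annihilated by $\dbar$ and $\dbar^\ast$. (Alternatively, one notes that $H^{2,1}_{L^2}(\oh)\cong \overline{H^{0,1}_{c,L^2}(\oh)}$ and $H^{0,1}_{c,L^2}(\oh)$ injects, by the natural map, into the ordinary Dolbeault group $H^{0,1}(\oh)=0$ once closed range lets one identify cohomology with the harmonic space — but the Fourier-mode computation is the cleaner route and is self-contained.)

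The main obstacle, as I see it, is making the last step — the identification $\mathcal H^{2,1}(\oh)=0$ — genuinely rigorous rather than merely plausible: one must be careful that the $\dbar^\ast$ appearing here is the Hilbert-space adjoint of the \emph{maximal} $\dbar$, which encodes a $\dbar$-Neumann boundary condition on the Levi-flat boundary $b\oh$, and the separation-of-variables computation must respect that boundary condition (on the annulus factor $\mathbb A$ the boundary circles $b\mathbb A^\pm$ contribute the genuine constraint, while the $\cx^\ast$ factor contributes only an integrability constraint). I would handle this by passing through the Hodge star to the minimal $(0,1)$ complex, where the boundary condition becomes ``$\dbar_c$-closed,'' i.e. the form extends by zero across $b\oh$ to a $\dbar$-closed form on a neighborhood in $X$; combined with $\dbar_c^\ast$-closedness this makes the form harmonic on an open set of the compact surface $X$ after the extension, and ellipticity plus the Laurent expansion on $\oh$ then kills it. Everything else — the two applications of \cite{l2serre}, the automatic closed range in top degree, the existence of $\mathsf N$ — is standard and can be quoted.
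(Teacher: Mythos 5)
Your reduction via $L^2$-Serre duality---that under the closed-range hypothesis the lemma is equivalent to the vanishing of the harmonic space $\mathcal{H}^{2,1}(\oh)$---is a legitimate reformulation, but the step carrying all the content, the proof that $\mathcal{H}^{2,1}(\oh)=0$, is never actually carried out, and neither of the two routes you sketch works. The Fourier/Laurent-mode computation is only asserted: a form in $\mathcal{H}^{2,1}(\oh)$ must satisfy the $\dbar$-Neumann boundary condition coming from the Hilbert-space adjoint of the \emph{maximal} $\dbar$ on the Levi-flat boundary, and you give no computation showing this kills every mode. In fact the authors state at the end of Section~\ref{sec-zonly} that they do not know whether the $\dbar$-closed form $g=\overline{\phi}\wedge\phi\wedge\psi$ lies in the closure of the range of $\dbar$; since $\mathcal{H}^{2,1}(\oh)=0$ would force $\ker\dbar=\overline{\img(\dbar)}$ and hence answer that question, your claimed computation would settle an open problem of the paper---a strong warning sign, as is the fact that your key step never uses the closed-range hypothesis. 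Your fallback argument is actually incorrect: for $\alpha\in\ker\dbar_c\cap\ker\dbar_c^{*}$ the extension by zero across $b\oh$ is $\dbar$-closed on $X$ (that is exactly what $\dbar_c$-closed means), but it is \emph{not} $\vartheta$-closed near $b\oh$, because $\dbar_c^{*}$ is the maximal realization of $\vartheta$ and imposes no boundary condition; the distributional $\vartheta$ of the zero-extension picks up a term supported on $b\oh$, so the extended form is not harmonic on any neighborhood of the boundary and ellipticity cannot be invoked.

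The paper's proof is entirely different and uses the closed-range hypothesis in an essential, quantitative way. Given $f$ with $\dbar_c f=0$, one extends by zero to a $\dbar$-closed $\tilde f$ on the compact K\"ahler surface $X=\p\times\e$ and solves $\dbar\tilde u=\tilde f$ there; the only obstruction is the pairing $(\tilde f,\overline{\psi})_X$ with the one-dimensional space $\mathcal{H}^{0,1}(X)$. Because the natural test form $v=z\overline{\phi}\wedge\overline{\psi}$ (with $\vartheta v=\overline{\psi}$) is not in $L^2(\oh)$, one approximates $f$ by compactly supported $f_\nu$ via Friedrichs' lemma and uses the closed range of $\dbar_c:L^2_{0,1}(\oh)\to L^2_{0,2}(\oh)$ (dual to the hypothesis) to get solutions of $\dbar_c u_c^\nu=\dbar f_\nu$ with $\norm{u_c^\nu}\leq C\norm{\dbar f_\nu}\to 0$, which forces $(\tilde f,\overline{\psi})_X=0$. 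The solution $\tilde u\in W^1(X)$ is then holomorphic on $X\setminus\overline{\oh}$, hence constant by Lemma~\ref{lem-oo}, and subtracting that constant gives a solution supported in $\overline{\oh}$, i.e.\ a $\dbar_c$-solution. To salvage your approach you would need an honest proof that $\mathcal{H}^{2,1}(\oh)=0$ under the closed-range hypothesis, which appears to be no easier than the lemma itself.
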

\begin{proof} Let $f$ be a $\dbar_c$-closed $(0,1)$-form on $\oh$. We need to show that the equation $\dbar_c u=f$ has a solution. 
We instead first consider the equation on $X=\p\times\e$ given as
$\dbar \tilde{u}=\widetilde{f}$,
where $\widetilde{f}$ is the  $\dbar$-closed  $(0,1)$-form  obtained by extending the form $f$ as 0 outside $\oh$. By the Hodge decomposition on 
the compact K\"ahler manifold $X$,
this equation has a solution
provided $\widetilde{f}$ is orthogonal to the Harmonic space $\mathcal{H}^{0,1}(X)$. But by the K\"unneth formula, $\mathcal{H}^{0,1}(X)$ is one dimensional and generated by 
the form $\overline{\psi}$, where $\psi$ is as  in \eqref{eq-psi}, and so there is a $\widetilde{u}$ satisfying the equation provided  $(\widetilde{f},\overline{\psi})_X=0$.

Define a smooth  (0,2)-form on $\oh$ by setting $v={z}{\overline\phi}\wedge{\overline\psi}$. One easily sees that $v$ is not 
in $L^2_{0,2}(\oh)$.  Recall that, if $\vartheta$ denotes the formal adjoint of the differential operator $\dbar$, we can write $\vartheta=- \ast \dbar\ast$, where $\ast$ denotes the Hodge star operator
on the de Rham complex of $X$, which is a $\cx$-antilinear map  (see \cite{l2serre} for details), and we have
\begin{align*}\vartheta v &= -\ast \dbar \ast (z\overline{\phi}\wedge\overline{\psi})\\
&= \ast \dbar(\overline{z}\phi\wedge\psi)\\
&=\ast \dbar\left(\frac{\overline{z}}{1+\abs{z}^2}d{z}\wedge\psi\right)\\
&=\ast \left(\frac{{1}}{(1+\abs{z}^2)^2}d\overline{z}\wedge dz\wedge\psi\right)\\
&=\ast( \overline\phi\wedge {\phi}\wedge\psi)\\
&=\overline{\psi.}
\end{align*}
Notice that if it were true that $v\in L^2_{0,2}(\oh)$ then we would have
$ (\widetilde{f},\overline{\psi})_X = (f, \vartheta  v)_{\oh}= (\dbar f, v)_{\oh}=0,$
where the boundary term in the integration by parts vanishes since $f$ is in the domain of $\dbar_c$. Since $v$ is not in $L^2$, we need a more delicate argument to prove the claim.

  Since $f$ satisfies $\dbar_c f=0$ in $\oh$,  using Friedrichs' lemma   we can approximate  $f$ by a sequence $\{f_\nu\}$ of smooth forms  with compact support in $\oh$ such that $f_\nu\to f$ in $L^2_{0,1}(\oh)$  and $\dbar f_\nu\to \dbar f=0$ in $L^2_{0,2}(\oh)$ as $\nu\to\infty$.  
    Thus we have 
\begin{align*} (\widetilde{f},\overline{\psi})_X&= (f,\vartheta  v)_{\oh}\\&=\lim_{\nu\to \infty} (f_\nu, \vartheta v)_{\oh}
\\& =\lim_{\nu\to \infty} (\dbar f_\nu, v)_{\oh},
\end{align*}
where the boundary term vanishes since $f_\nu$ has compact support in $\oh$. 

Assuming that $\dbar :L^2_{2,0}(\oh) \to L^2_{2,1}(\oh)$ has closed range, this implies by duality that 
 $\dbar_c :L^2_{0,1}(\oh)\to L^2_{0,2}(\oh)$ has closed range (see Theorem 3 in  \cite{l2serre}).  Since  $\dbar_cf_\nu=\dbar f_\nu$ is in the range of $\dbar_c$, there exists $u_c^\nu$ with $u_c^\nu\in \text{Dom}(\dbar_c)$ such that $$\dbar_c u_c^\nu=\dbar f_\nu $$
 and 
 \begin{equation}\label{4.9} \|u_c^\nu\|\le C\|\dbar_c f_\nu\|\to 0.\end{equation}
If $\dbar$ has closed range in $L^2_{2,1}(\oh)$, then  
  the $\dbar$-Neumann operator   $N_{2,0} :L^2_{2,0}(\oh) \to L^2_{2,0}(\oh)  $ exists.  We can take 
$u_c^\nu= -\ast\dbar  N^\nu_{2,0}\ast \dbar f_\nu $  
and $u_c^\nu$ satisfies (4.9).  One  can approximate $u_c^\nu$ by a sequence of smooth forms   $u_{c}^{\nu,\epsilon}$  compactly supported in $\oh$   such that
$u_{c}^{\nu,\epsilon}\to u_c^\nu$ and  $\dbar u_{c}^{\nu,\epsilon}\to \dbar u_{c}^{\nu}$ in $L^2$ (see e.g. Lemma 2.4 in \cite{LS}). 
 
 Therefore
\begin{align*}
(\dbar f_\nu, v)_{\oh}&=  ( \dbar_c u_c^\nu, v)_{\oh}= \lim_{\epsilon\to 0} ( \dbar_c u_{c}^{\nu,\epsilon}, v)_{\oh}
\\&= \lim_{\epsilon\to 0} (   u_{c}^{\nu,\epsilon}, \vartheta v)_{\oh}
\\&=  (  u_c^\nu,  \overline \psi )_{\oh} 
\\& \to 0 
\end{align*}
where we have used (4.9). 
 Therefore $(\widetilde{f},\overline{\psi})_X=0$  and
this  shows that there is a $\widetilde{u}$ on $X$ satisfying $\dbar\tilde{u}=\tilde{f}$. Since $\tilde{f}\in L^2(X)$, by interior elliptic 
regularity, we have $\tilde{u}\in W^1(X)$. Since $\tilde{u}|_{X\setminus\overline\oh}$ is holomorphic, it follows from Lemma~\ref{lem-oo}, parts 2 and 3
that $\tilde{u}$ reduces to a constant $c$ on $X\setminus \oh$. Therefore $u=\tilde{u}-c$ is a   function in $X$
satisfying $\dbar u=f$ in $X$. The  support of $u$ is  in 
$\overline{\oh}$.  The boundary $b\oh$ is smooth.  Thus we have, using the weak and strong extension for $\dbar_c$   (cf. \cite[Lemma~2.4]{LS})  
  that  $u\in \text{Dom}(\dbar_c)$ and    $\dbar_c u =f$. 
\end{proof}

 \begin{proof}[End of Proof of Theorem~\ref{thm-main}] Suppose that 
  the range of $\dbar:L^2_{2,0}(\oh)
\to L^2_{2,1}(\oh)$ is closed in $L^2_{2,1}(\oh)$. Then by Lemma~\ref{lem-4.4} we have
$H^{0,1}_{c,L^2}(\oh)= 0$. But this is in  contradiction with the conclusion of Proposition~\ref{prop-tool}.    

\end{proof}

\section{Forms on $\oh$ depending on the $\cx^*$ factor only }
\label{sec-zonly}
Since $\oh$ is biholomorphic to $\cx^*\times \mathbb{A}$, it may seem that the lack of closed range proved above might somehow be related to the factor $\cx^*$. 
To investigate this, we consider forms on $\oh$, whose coefficients depend only on the variable $Z$ ranging over $\cx^*$ and not on the variable $W$ ranging over $\mathbb{A}$.
In the natural coordinates $(z,w)$ of $\p\times \e$, this corresponds to considering forms on $\oh$ whose coefficients are functions of the inhomogeneous coordinate $z$ on $\p\setminus\{\infty\}$,
since $z=Z$ by \eqref{eq-Phi}.

Let us say that a $(2,0)$-form $u\cdot dz\wedge \psi$ on $\oh$ {\em depends on $z$ only} if the coefficient function $u$ is a function of 
the coordinate $z$ alone. Denote by $L^2_{2,0}(\oh, {\text {$z$-only}})$ the space of square-integrable $(2,0)$-forms which depend on $z$ only. Similarly, let  $L^2_{2,1}(\oh, \text{$z$-{\rm only}})$ be the subspace 
of $L^2_{2,1}(\oh)$ consisting of forms of the type $v\cdot d\overline{z}\wedge dz\wedge\psi$, with $v$ a function of $z$. Note that $L^2_{2,0}(\oh, {\text {$z$-only}})$ and $L^2_{2,1}(\oh, \text{$z$-{\rm only}})$
are closed subspaces of $L^2_{2,0}(\oh)$ and $L^2_{2,1}(\oh)$ respectively.
Note further that each form in $L^2_{2,1}(\oh, \text{$z$-{\rm only}})$ is automatically closed, and 
the image under $\dbar$  of  $L^2_{2,0}(\oh, \text{$z$-{\rm only}})$  is contained in  $L^2_{2,1}(\oh, \text{$z$-{\rm only}})$.
Let   $T:L^2(\cx)\dashrightarrow L^2(\cx)$ be the  unbounded closed densely defined operator
given on smooth functions  by
\begin{equation}\label{eq-tdef} Tu(z)= (1+\abs{z}^2) \frac{\partial u}{\partial \overline{z}},
\end{equation}
and with  the weak maximal realization. Then we have the following:
\begin{lemma}\label{lem-commdiag} There are Hilbert-space isomorphisms $\eta: L^2_{2,0}(\oh, {\text {$z$-only}})\to L^2(\cx)$ and $\theta: L^2_{2,1}(\oh, \text{$z$-{\rm only}})\to L^2(\cx)$ such that the following diagram commutes:
\[\begin{CD}
L^2_{2,0}(\oh, \text{$z$-{\rm only}})\cap \dom(\dbar) @>\dbar>> L^2_{2,1}(\oh,\text{$z$-{\rm only}})\\
@V{\eta}VV  					@V{\theta}VV\\
\dom(T) @>T>> L^2(\cx)\\
\end{CD}\]
where $\dom(T)\subset L^2(\cx)$ is the domain of the maximally realized operator $T$, and the map $\eta: dom(\dbar)\cap L^2_{2,0}(\oh, \text{$z$-{\rm only}})\to \dom(T)$ is a bijection.
\end{lemma}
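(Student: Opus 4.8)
The plan is to produce $\eta$ and $\theta$ as explicit isometric isomorphisms, computed in the product model $\Omega=\cx^*\times\mathbb{A}$, and then to verify the commutativity and the domain statement by translating the distributional $\dbar$ on $\oh$ into the distributional $\partial/\partial\overline z$ on the punctured plane.

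First I would record the pointwise norms of the relevant forms in terms of the orthonormal coframe $\phi,\psi$ of \eqref{eq-metric}. Since $dz=(1+\abs{z}^2)\phi$ and $d\overline z=(1+\abs{z}^2)\overline\phi$, a $z$-only $(2,0)$-form $u(z)\,dz\wedge\psi$ equals $u(z)(1+\abs{z}^2)\,\phi\wedge\psi$, hence (as $\phi\wedge\psi$ has unit length) has pointwise length $\abs{u(z)}(1+\abs{z}^2)$; likewise, as $\overline\phi\wedge\phi\wedge\psi$ has unit length, a $z$-only $(2,1)$-form $v(z)\,d\overline z\wedge dz\wedge\psi$ has pointwise length $\abs{v(z)}(1+\abs{z}^2)^2$. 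Integrating the squares against the Riemannian volume form, which in the product model is \eqref{eq-volform}, the weight $(1+\abs{Z}^2)^{-2}$ there absorbs part (respectively all) of the factor $(1+\abs{z}^2)^2$ coming from the pointwise norm, while the $W$-integral separates off as the finite positive constant $C=\beta^2\int_{\mathbb{A}}\abs{W}^{-2}\,dV_W$. This gives
\[ \norm{u\,dz\wedge\psi}_{L^2(\oh)}^2=C\int_{\cx}\abs{u}^2\,dV,\qquad \norm{v\,d\overline z\wedge dz\wedge\psi}_{L^2(\oh)}^2=C\int_{\cx}\abs{(1+\abs{z}^2)v}^2\,dV,\]
so I would set $\eta(u\,dz\wedge\psi)=C^{1/2}u$ and $\theta(v\,d\overline z\wedge dz\wedge\psi)=C^{1/2}(1+\abs{z}^2)v$. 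Each is a surjective isometry onto $L^2(\cx)$ (surjectivity because $\cx\setminus\cx^*$ is a null set and, for $\theta$, because dividing a given $L^2$ function by $1+\abs{z}^2$ stays in the right space).

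For commutativity, since $\psi$ is holomorphic and $dz$ is $\dbar$-closed, $\dbar(u\,dz\wedge\psi)=\frac{\partial u}{\partial\overline z}\,d\overline z\wedge dz\wedge\psi$; applying $\theta$ yields $C^{1/2}(1+\abs{z}^2)\frac{\partial u}{\partial\overline z}=C^{1/2}Tu=T(\eta(u\,dz\wedge\psi))$, so the square commutes. It remains to see that $\eta$ maps $\dom(\dbar)\cap L^2_{2,0}(\oh,z\text{-only})$ onto $\dom(T)$. The inclusion $\supseteq$ is immediate, since restricting a distribution from $\cx$ to $\cx^*$ preserves $L^2$-membership. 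For $\subseteq$, if $u\,dz\wedge\psi\in\dom(\dbar)$ then $u\in L^2(\cx)$ and the distributional derivative $\partial u/\partial\overline z$ computed on $\cx^*$ lies in $L^2_{\mathrm{loc}}$ near the origin; I would then invoke removability of the point $\{0\}$ for $\dbar$ with $L^2_{\mathrm{loc}}$ data to conclude that this derivative extends across $0$ with the same $L^2$-value, so that $Tu\in L^2(\cx)$ and $C^{1/2}u\in\dom(T)$.

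The only step that is not bookkeeping with the metric is this removability assertion. I expect to prove it directly: the distribution $\partial_{\overline z}u-g$, where $g\in L^2_{\mathrm{loc}}$ is the derivative computed on $\cx^*$, is supported at the origin, hence is a finite combination $\sum_{\abs{\alpha}\le N}c_\alpha\partial^\alpha\delta_0$; pairing it with the rescaled bumps $\varphi(z/\epsilon)$ and using Cauchy--Schwarz with $u,g\in L^2_{\mathrm{loc}}$ shows the pairing is $o(1)$ as $\epsilon\to0$, which forces $c_\alpha=0$ for every $\alpha$ (for $\abs{\alpha}\ge1$ because the pairing would otherwise be of size $\epsilon^{-\abs{\alpha}}$, and for $\alpha=0$ by choosing $\varphi(0)\ne0$). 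Alternatively one may quote a standard removable-singularity theorem for the Cauchy--Riemann operator.
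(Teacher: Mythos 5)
Your construction of $\eta$ and $\theta$, the norm computations in the product model, and the verification of commutativity are the same as in the paper (up to your harmless normalization by $C^{1/2}$ making the maps isometries rather than isomorphisms-up-to-a-constant). Where you genuinely diverge is in establishing that $\eta$ carries $\dom(\dbar)\cap L^2_{2,0}(\oh,\text{$z$-only})$ bijectively onto $\dom(T)$. The paper handles surjectivity by observing that smooth elements of $\dom(T)$ obviously lie in the image and then invoking the density of smooth forms, in the graph norm, in the domain of the maximal realization of a differential operator (together with graph-closedness of the image, which follows since $\eta,\theta$ are bounded isomorphisms intertwining the two closed operators). You instead argue directly at the level of distributions: the only discrepancy between the maximal $\dbar$ on $\oh\cong\cx^*\times\mathbb{A}$ and the maximal $T$ on $L^2(\cx)$ is the puncture $Z=0$, and you remove it by showing that $\partial_{\overline z}u-g$ is a distribution supported at the origin whose coefficients are killed by pairing with rescaled bumps and Cauchy--Schwarz (the scale-invariance of $\Vert\partial_{\overline z}[\varphi(\cdot/\epsilon)]\Vert_{L^2}$ in two real dimensions is what makes this work). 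Both routes are correct; yours has the merit of explicitly proving the inclusion $\eta(\dom(\dbar))\subseteq\dom(T)$ --- i.e.\ that the distributional derivative computed on $\cx^*$ really is the distributional derivative on all of $\cx$ --- which the paper's commutativity computation tacitly assumes and its density argument does not address, while the paper's Friedrichs-type argument is shorter and avoids any discussion of removable singularities.
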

\begin{proof}
Let $\eta:L^2_{2,0}(\oh, \text{$z$-{\rm only}})\to L^2(\cx)$ be the map
\[ \eta : u\cdot dz\wedge \psi \mapsto u \]
and let $\theta: L^2_{2,1}(\oh, \text{$z$-{\rm only}})\to L^2(\cx)$ be the map 
\[ \theta: v \cdot\overline{\phi} \wedge \phi \wedge \psi \to \frac{v}{ (1+\abs{z}^2)}.\]
(Note that each element of $L^2_{2,1}(\oh, \text{$z$-{\rm only}})$ may be written in the form 
$ v \cdot\overline{\phi} \wedge \phi \wedge \psi $, for a function $v$ of $z\in \cx^*$). We first check that $\eta$ and $\theta$ 
are isomorphisms. The space $L^2_{2,0}(\oh)$ is defined independently of the metric, since its elements are sections of the canonical bundle, and we have
\begin{align*}
\norm{f}^2&= \int_\oh f\wedge \overline{f}\\
&= \int_{\oh}\abs{u}^2 dz\wedge \psi\wedge d\overline{z}\wedge \overline{\psi}\\
& =4\int_{\oh}\abs{u}^2 (1+\abs{z}^2)^2 d\oh,
\end{align*}
where $d\oh= (-2i)^{-2}\phi\wedge\overline{\phi}\wedge\psi\wedge\overline{\psi}$ is the Riemannian volume form of $\oh$. Pulling back the integral to $\cx^*\times\mathbb{A}$ via the map $\Phi$ of 
\eqref{eq-Phi}, and noting that under the pullback metric, the volume form is the form $\omega$ of \eqref{eq-volform}, we see that
\begin{align*}\norm{f}^2&=4 \int_{\cx^*\times\mathbb{A}}\abs{u}^2(1+\abs{Z}^2)^2 \omega\\
&=4\beta^2 \int_{\cx^*\times\mathbb{A}}\frac{\abs{u}^2}{\abs{W}^2}dV,
\end{align*}
where $dV$ denotes the standard Euclidean volume form of $\cx^2=\rl^4$. Therefore, if $f\in L^2_{2,0}(\oh,\text{$z$-only})$, it follows by Fubini'{}s theorem  that 
\[ \norm{f}^2_{L^2_{2,0}(\oh,\text{$z$-only})}= C\norm{u}^2_{L^2(\cx)},\]
where $C$ is a constant independent of $f=u\cdot dz\wedge\psi$. Since $\eta(f)= u$,  and $\eta$ is clearly surjective, it follows that $\eta$ is an isomorphism of Hilbert spaces.

We now consider the map $\theta: g\mapsto v\cdot(1+\abs{z}^2)^{-1}$, where $g=v\cdot\overline\phi \wedge \phi \wedge \psi $. We have
\begin{align*}
\norm{g}^2_{L^2_{2,1}(\oh)} &= \int_\oh \abs{v}^2 d\oh\\
&=\beta^2 \int_{\cx^*\times \mathbb{A}} \abs{v(Z)}^2 \frac{dV}{(1+\abs{Z}^2)^2\abs{W}^2}\\
&=C \int_{\cx^*}\frac{\abs{v(Z)}^2}{(1+\abs{Z}^2)^2}dV(Z)\\
&= C \norm{\theta(g)}^2_{L^2(\cx)},
\end{align*}
so that $\theta$ is also an isomorphism, since it is clear that $\theta$ is surjective.

Now let $f\in \dom(\dbar)\cap  L^2_{2,0}(\oh, \text{$z$-{\rm only}}) $, and write $f= u \cdot dz\wedge \psi$ where $u$ is a
function of the variable $z$ only. Then $T(\eta(f))= (1+\abs{z}^2)\dfrac{\partial u}{\partial \overline{z}}$. But
\begin{align*}\dbar f &=\dfrac{\partial u}{\partial \overline{z}} d\overline{z}\wedge  dz\wedge \psi\\
&=  (1+\abs{z}^2)^2\dfrac{\partial u}{\partial \overline{z}}\ \overline{\phi}\wedge{\phi}\wedge\psi,
\end{align*}
so that $\theta(\dbar f)=  (1+\abs{z}^2)\dfrac{\partial u}{\partial \overline{z}}= T(\eta f)$ and the diagram commutes. To see that $\eta$ is bijective, note first that $\eta$ is injective, since it is an isomorphism of Hilbert spaces. Further, it is clear that whenever $u\in \mathcal{C}^\infty(\cx)\cap \dom(T)$, then $u$ lies in the image of $\eta$. Now the surjectivity of 
$\eta$ follows from the density of smooth forms in the domain of the maximal  $L^2$-realization  of a differential operator.
\end{proof}
We now analyze the behavior of the operator $T$:

\begin{lemma}The range of the  operator $T:L^2(\cx)\dashrightarrow L^2(\cx)$ of \eqref{eq-tdef} is   the orthogonal complement of the function $\left(1+\abs{z}^2\right)^{-1}$ in $L^2(\cx)$. In particular
the range is closed.
\end{lemma}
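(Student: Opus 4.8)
The plan is to identify the cokernel of $T$ through its Hilbert space adjoint, and to obtain the closed range assertion by transporting the problem to the compact Riemann sphere, where it becomes an instance of Hodge theory. First note that $T$ is injective: if $Tu=0$ then $\partial u/\partial\overline z=0$, so $u$ is an entire function lying in $L^2(\cx)$, hence $u\equiv 0$. Next, using the formal adjoint identity $\int_{\cx}(\partial u/\partial\overline z)\overline w\,dA=-\int_{\cx}u\,\overline{\partial w/\partial z}\,dA$ on test functions $u\in\mathcal C^\infty_0(\cx)$, one sees that the maximally realized adjoint acts distributionally by $T^*v=-\dfrac{\partial}{\partial z}\bigl((1+\abs z^2)v\bigr)$. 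Hence $v\in\ker T^*$ precisely when $(1+\abs z^2)v$ is anti-holomorphic, i.e. $v=\overline h/(1+\abs z^2)$ for some entire $h$, subject to $v\in L^2(\cx)$, that is $\int_{\cx}\abs h^2(1+\abs z^2)^{-2}\,dA<\infty$. Using the subharmonicity of $\abs h^2$ to bound $\abs{h(z_0)}^2$ by its average over the unit disk about $z_0$, this constraint forces $h(z)=o(\abs z^2)$, so $h$ is an affine function $a+bz$; feeding this back in, the contribution of $\abs{bz}^2$ is not integrable against $(1+\abs z^2)^{-2}$ unless $b=0$. Therefore
\[ \ker T^*\;=\;\cx\cdot\frac{1}{1+\abs z^2}, \]
a one-dimensional (in particular closed) subspace, since $(1+\abs z^2)^{-1}\in L^2(\cx)$.

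To prove that $T$ has closed range I would pass to $\p^1$. Give $\p^1$ the Fubini--Study metric for which $\phi=dz/(1+\abs z^2)$ of \eqref{eq-varphi} has unit length. Then $u\mapsto u\,dz$ is, up to a constant factor, an isometry of the domain copy of $L^2(\cx)$ onto the space of $L^2$ $(1,0)$-forms on $\p^1$, and multiplication by a suitable power of $1+\abs z^2$ identifies the target copy of $L^2(\cx)$ isometrically (up to a constant) with the space of $L^2$ $(1,1)$-forms on $\p^1$; under these identifications $T$ becomes a nonzero scalar multiple of $\dbar$ acting from the $L^2$ $(1,0)$-forms to the $L^2$ $(1,1)$-forms on $\p^1$, the single point $z=\infty$ being a removable singularity for the $L^2$-realized $\dbar$-complex on the curve $\p^1$. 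Since $\p^1$ is compact, the Laplacian $\Box=\dbar^*\dbar$ on $(1,0)$-forms is elliptic, self-adjoint and has discrete spectrum, and its kernel is the space of holomorphic $1$-forms on $\p^1$, which is trivial; hence $\Box$ is boundedly invertible and $\norm s\le C\norm{\dbar s}$ for every $s$ in its domain. Transported back, this says $\norm u_{L^2(\cx)}\le C\norm{Tu}_{L^2(\cx)}$ for all $u\in\dom(T)$, which, together with the injectivity of $T$, is exactly the statement that $T$ has closed range. Combining with the first paragraph, $\img(T)=(\ker T^*)^\perp=\bigl\{g\in L^2(\cx):\int_{\cx}g(z)(1+\abs z^2)^{-1}\,dA(z)=0\bigr\}$, which is the asserted orthogonal complement, and in particular closed.

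The only point that is not purely formal is the closed range step, and the route above handles it by reducing to Hodge theory on the \emph{compact} manifold $\p^1$; the cost is merely the bookkeeping of the weights $(1+\abs z^2)^{\pm1}$ in the isometries and the standard removability of the puncture at $z=\infty$. If one wishes to stay on $\cx$, the same $L^2$-estimate can be obtained either by a Schur test applied to the modified Cauchy kernel $\tfrac1\pi\,\zeta\bigl(z(z-\zeta)\bigr)^{-1}$, which solves $\dbar u=g/(1+\abs z^2)$ once the orthogonality relation $\int_{\cx}g(1+\abs z^2)^{-1}=0$ is used to subtract the $1/z$-tail of the usual Cauchy transform, or by expanding in angular Fourier modes and solving the resulting decoupled first-order ordinary differential equations with uniform $L^2$-bounds; both are more computational than the argument proposed.
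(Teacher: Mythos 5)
Your proof is correct, and while the computation of $\ker T^*$ coincides with the paper's (the paper rules out nonconstant entire $h$ by orthogonality of the monomials $z^k$ in the weighted norm rather than by your subharmonicity/Liouville growth argument, but both work), your treatment of the closed-range step is genuinely different. The paper stays on $\cx$ and works on the Fourier side: for $f\perp(1+\abs{z}^2)^{-1}$ it sets $\gamma=\mathfrak{F}\left(f(1+\abs{z}^2)^{-1}\right)$, observes that $(1+\Delta)\gamma=\hat f\in L^2$ forces $\gamma\in W^2_{\rm loc}\subset C^\alpha_{\rm loc}$, uses $\gamma(0)=\left(f,(1+\abs{z}^2)^{-1}\right)=0$ to get $\abs{\gamma(\zeta)}\leq C_\alpha\abs{\zeta}^\alpha$, and then verifies that $u=\mathfrak{F}^{-1}(2\gamma/\zeta)$ lies in $L^2$ and solves $Tu=f$; the orthogonality hypothesis is consumed precisely where one divides by the symbol $\zeta/2$ of $\partial/\partial\overline z$. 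You instead conjugate $T$ into $\dbar:L^2_{1,0}(\p^1)\to L^2_{1,1}(\p^1)$ and invoke Hodge theory on the compact curve; your weight bookkeeping is right ($\norm{u\,dz}_{L^2_{1,0}(\p^1)}=\norm{u}_{L^2(\cx)}$ and $\norm{v\,d\overline z\wedge dz}_{L^2_{1,1}(\p^1)}=\norm{(1+\abs{z}^2)v}_{L^2(\cx)}$ up to constants), and $(1+\abs{z}^2)^{-1}$ is indeed the image of the harmonic form $\overline\phi\wedge\phi$, consistent with the subsequent proposition on $z$-only forms. Your route buys a conceptual explanation of the one-dimensional cokernel as $H^{1,1}(\p^1)\cong\cx$ and a closed-range statement with no explicit solution operator; the paper's route buys an explicit solution formula and avoids the two small analytic debts you still owe: (i) the removability of the puncture at $z=\infty$, i.e.\ that the maximal domain of $T$ on $\cx$ is carried exactly onto the maximal domain of $\dbar$ on $\p^1$ --- true for a single point via the standard cutoff $\chi_\epsilon$ with $\abs{\nabla\chi_\epsilon}\lesssim\epsilon^{-1}$ supported on a set of area $O(\epsilon^2)$, but this is the load-bearing step of your reduction and should be written out; and (ii) the passage from $\norm{s}^2\leq C(\Box s,s)$ on $\dom(\Box)$ to $\norm{s}\leq C\norm{\dbar s}$ on all of $\dom(\dbar)$, which needs the standard core/density argument on the compact manifold. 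Neither is a gap in substance.
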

\begin{proof}
We have the decomposition
$L^2(\cx)= \overline{{\rm img}(T)}\oplus \ker(T^*)$, where $T^*$ denotes the Hilbert space adjoint of $T$.   
Integration by parts shows that for $v\in {\rm Dom}(T^*)$, we have   $ T^*v = -\dfrac{\partial}{\partial z}\left\{(1+\abs{z}^2)v\right\}.$
Consequently, the kernel of $T^*$ consists of $v\in L^2(\cx)$ such that $(1+\abs{z}^2)\overline{v}$ is entire. Denoting this entire
function by $h$, we have  $\overline{v} =(1+\abs{z}^2)^{-1}h$ is in $L^2(\cx)$. Expanding $h$ in an entire Taylor series, we see that 
$\overline{v}\in L^2(\cx)$ if and only if $h$ is reduced to a constant.
Consequently, the space $\ker(T^*)$ is one-dimensional  and spanned by $(1+\abs{z}^2)^{-1}\in L^2(\cx)$.
Therefore
\[\overline{{\img}(T)}= \left\{f\in L^2(\cx)\colon \int_\cx {f}\cdot\left(1+\abs{z}^2\right)^{-1}dA=0\right\},\]
where $dA$ is the Lebesgue measure on $\cx$. Note also that $T$ is injective from $L^2(\cx)$ to $\img(T)$,
since its null-space consists of $L^2$ holomorphic functions  on $\cx$, the only instance of which is the zero function.

Let now $f\in \overline{{\img}(T)}$, i.e. the inner product $\left(f,(1+\abs{z}^2)^{-1}\right)$ vanishes. Noting that $f\in L^2(\cx)$, we easily conclude 
using the Cauchy-Schwarz inequality that $f \cdot\left(1+\abs{z}^2\right)^{-1}\in L^1(\cx)\cap L^2(\cx)$.
Define a function  $\gamma$ on $\cx$ by $\gamma= \displaystyle{\mathfrak{F}\left(f \cdot\left(1+\abs{z}^2\right)^{-1}\right)}$, where $\mathfrak{F}$ is the Fourier transform on $\mathbb R^2$. 
 By standard properties of 
the  Fourier transform, the function $\gamma$ is continuous, vanishes at infinity and is also in $L^2(\cx)$.  Furthermore, we have 
$(1+\Delta)\gamma=\hat{f}\in L^2(\cx)$, where $\Delta$ is the Laplacian, the operator with symbol $\abs{z}^2$.
Combining with the fact that $f\in L^2(\cx)$, we conclude that  $\Delta\gamma\in L^2(\cx)$, so that we have $\gamma\in W^2_{\rm loc}(\cx)$.
By Sobolev embedding then, $\gamma$ belongs locally  to the H\"{o}lder space $C^\alpha$ for any $0<\alpha<1$.  Also by definition of $\gamma$
\begin{align*} \gamma(0)&= \int_\cx (1+\abs{z}^2)^{-1} f(z) e^{-i\langle 0, z \rangle}dA(z)\\
&=\left(f, (1+\abs{z}^2)^{-1}\right)\\
&=0.
\end{align*}
It follows therefore that for each $\alpha$ with $0<\alpha<1$, there is a $C_\alpha$ such that 
\[ \abs{\gamma(\zeta)}\leq C_\alpha \abs{\zeta}^\alpha.\]

Let  $ u=\mathfrak{F}^{-1}\left(\dfrac{2\gamma}{\zeta}\right)$. We assume that $u$ is in $L^2(\cx)$ first. 
Then, noting that the Fourier multiplier corresponding to the operator $\dfrac{\partial}{\partial \overline{z}}$ is $\dfrac{\zeta}{2}$, we have \begin{align*} Tu&= (1+\abs{z}^2)\frac{\partial}{\partial{\overline{z}}}\mathfrak{F}^{-1}\left(\dfrac{2\gamma}{\zeta}\right)\\
&= (1+\abs{z}^2)\mathfrak{F}^{-1}(\gamma)\\
&= (1+\abs{z}^2)\cdot  (1+\abs{z}^2)^{-1} f\\
&=f,
\end{align*} so that $u$
satisfies  $Tu=f$. To complete the proof it suffices to show that
$u$  is in fact in $L^2(\cx)$, so that we will have $u\in \dom(T)$ and consequently $f\in {\img}(T)$.
We have for some constants $C>0$
 \begin{align*}\norm{\frac{\gamma}{\zeta}}_{L^2(\cx)}^2&\leq C\int_{\zeta\in \cx,|\zeta|\le 1} \frac{\abs{\zeta}^{2\alpha}}{\abs{\zeta}^2}dA(\zeta)+\int_{\zeta\in \cx,|\zeta|\ge 1} \frac{\abs{\gamma}^{2}}{\abs{\zeta}^2}dA(\zeta)\\
 &\leq C \left(\int_{r=0}^1{r^{2\alpha-1}}dr +1\right)\\
 &<\infty,
 \end{align*}
 where $A$ is a positive real number. This
  shows that $u= \mathfrak{F}^{-1}\left(\dfrac{2\gamma}{\zeta}\right)$ is in $L^2(\cx)$. This completes the proof.
\end{proof}
Then we have the following:
\begin{proposition}\label{prop-zonly} The operator $\dbar:L^2_{2,0}(\oh, {\text {$z$-{\rm only}}})\dashrightarrow L^2_{2,1}(\oh)$ has closed range. The range is the orthogonal complement of the form $\overline{\phi}\wedge\phi
\wedge \psi$ in the closed  subspace $ L^2_{2,1}(\oh,\text{$z$-{\rm only}})$.
\end{proposition}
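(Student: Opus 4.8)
The plan is to deduce everything from Lemma~\ref{lem-commdiag} together with the description of $\img(T)$ established in the lemma immediately preceding this proposition. Write $R$ for the range of $\dbar$ on $L^2_{2,0}(\oh,\text{$z$-only})\cap\dom(\dbar)$; as already observed in the text, $R\subseteq L^2_{2,1}(\oh,\text{$z$-only})$, and the latter is a closed subspace of $L^2_{2,1}(\oh)$.

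First I would transfer the problem to $\cx$. By the commutativity of the diagram in Lemma~\ref{lem-commdiag}, $\theta(\dbar\xi)=T(\eta\xi)$ for every $\xi\in\dom(\dbar)\cap L^2_{2,0}(\oh,\text{$z$-only})$, and $\eta$ carries $\dom(\dbar)\cap L^2_{2,0}(\oh,\text{$z$-only})$ \emph{bijectively} onto $\dom(T)$. Hence $\theta(R)=T(\dom(T))=\img(T)$, and since $\theta$ is a Hilbert-space isomorphism, $R=\theta^{-1}(\img(T))$.

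Closedness is then immediate: by the preceding lemma $\img(T)$ is the orthogonal complement of $(1+\abs{z}^2)^{-1}$ in $L^2(\cx)$, in particular closed; its preimage under the homeomorphism $\theta^{-1}$ is closed in $L^2_{2,1}(\oh,\text{$z$-only})$, hence closed in $L^2_{2,1}(\oh)$, since $L^2_{2,1}(\oh,\text{$z$-only})$ is itself closed there.

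Finally I would pin down the range explicitly by transporting the orthogonality condition through $\theta$. For $g=v\cdot\overline{\phi}\wedge\phi\wedge\psi\in L^2_{2,1}(\oh,\text{$z$-only})$ one has $\theta(g)=v/(1+\abs{z}^2)$, so $g\in R$ exactly when $\big(v/(1+\abs{z}^2),\,(1+\abs{z}^2)^{-1}\big)_{L^2(\cx)}=0$, that is, $\int_\cx v\,(1+\abs{z}^2)^{-2}\,dA=0$. On the other hand, $g_0:=\overline{\phi}\wedge\phi\wedge\psi$ has coefficient $v\equiv1$ and finite norm, so $g_0\in L^2_{2,1}(\oh,\text{$z$-only})$; polarizing the identity $\norm{v\cdot\overline{\phi}\wedge\phi\wedge\psi}^2_{L^2_{2,1}(\oh)}=C\int_\cx\abs{v}^2(1+\abs{z}^2)^{-2}\,dA$ from the proof of Lemma~\ref{lem-commdiag} gives $(g,g_0)_{L^2_{2,1}(\oh)}=C\int_\cx v\,(1+\abs{z}^2)^{-2}\,dA$. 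Therefore $g\in R$ if and only if $g\perp g_0$ inside $L^2_{2,1}(\oh,\text{$z$-only})$, which is exactly the asserted description. There is no serious obstacle here given the two preceding lemmas; the only points needing a little care are invoking the bijectivity of $\eta$ on domains (rather than just the commutativity of the diagram) so as to obtain $\theta(R)=\img(T)$ with equality, and the observation that closedness in the closed subspace $L^2_{2,1}(\oh,\text{$z$-only})$ automatically passes to closedness in $L^2_{2,1}(\oh)$.
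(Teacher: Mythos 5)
Your proposal is correct and follows essentially the same route as the paper: transfer the problem to $\cx$ via the commutative diagram of Lemma~\ref{lem-commdiag}, use the bijectivity of $\eta$ onto $\dom(T)$ to get $R=\theta^{-1}(\img(T))$, and then transport the orthogonality description of $\img(T)$ back through the (constant-multiple) isometry $\theta$. Your explicit polarization of the norm identity is just a spelled-out version of the paper's remark that $\theta$ is an isometry up to a constant with $\theta^{-1}\bigl((1+\abs{z}^2)^{-1}\bigr)=\overline{\phi}\wedge\phi\wedge\psi$.
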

\begin{proof}According to Lemma~\ref{lem-commdiag}, the range of the operator
$\dbar:L^2_{2,0}(\oh, \text{$z$-{\rm only}})\dashrightarrow L^2_{2,1}(\oh, \text{$z$-{\rm only}})$ is equal to
$\theta^{-1}\circ T \circ \eta$, and since $\eta$ is surjective onto $\dom(T)$, the range of $\dbar$ coincides with 
$\theta^{-1}({\rm range}(T))$. Since $\theta$ is an isomorphism and an isometry (up to a constant)  of Hilbert spaces,  and the range of $T$ is the orthogonal 
complement of $v=(1+\abs{z}^2)^{-1}$ in $L^2(\cx)$, it follows that the range of $\dbar$ is the orthogonal complement in $ L^2_{2,1}(\oh,\text{$z$-{\rm only}})$ of $\theta^{-1}(v)=\overline{\phi}\wedge\phi\wedge\psi$.
\end{proof}


The result of  Proposition~\ref{prop-zonly} can be strengthened to prove the following:

\begin{proposition}The form $g=\overline{\phi}\wedge\phi\wedge\psi$ is not in the range of $\dbar: L^2_{2,0}(\oh)\to L^2_{2,1}(\oh)$. 
\end{proposition}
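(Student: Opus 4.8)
The plan is to deduce, from a hypothetical $L^2$-solution of $\dbar u=g$, a solution of the ``$z$-only'' problem of Section~\ref{sec-zonly}, and then to invoke Proposition~\ref{prop-zonly}, which already asserts that $g=\overline\phi\wedge\phi\wedge\psi$ is not in the range of the $z$-only $\dbar$-operator. So suppose, for a contradiction, that $u\in L^2_{2,0}(\oh)\cap\dom(\dbar)$ satisfies $\dbar u=g$. On $\oh$ (which is contained in $\cx^*\times\e$) the $(2,0)$-form $dz\wedge\psi$ is holomorphic and nowhere zero, so $u=a\cdot dz\wedge\psi$ for some scalar function $a\in L^2(\oh)$. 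First I would compute $\dbar u=\dbar a\wedge dz\wedge\psi$ and expand the result in the pointwise basis $\{\,g,\ \phi\wedge\psi\wedge\overline\psi\,\}$ of the space of $(2,1)$-forms, using $dz=(1+\abs{z}^2)\phi$, $d\overline z=(1+\abs{z}^2)\overline\phi$ and $d\overline w=\overline w\,\overline\psi$. Requiring the coefficient of $g$ to equal $1$ and that of $\phi\wedge\psi\wedge\overline\psi$ to equal $0$ gives exactly
\[ \frac{\partial a}{\partial\overline w}=0,\qquad \frac{\partial a}{\partial\overline z}=\frac{1}{(1+\abs{z}^2)^2}. \]
Since $\dfrac{\partial}{\partial\overline z}\Bigl(\dfrac{\overline z}{1+\abs{z}^2}\Bigr)=\dfrac{1}{(1+\abs{z}^2)^2}$, the function $h:=a-\dfrac{\overline z}{1+\abs{z}^2}$ is annihilated by $\partial/\partial\overline z$ and by $\partial/\partial\overline w$, hence is holomorphic on $\oh$.

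Next I would pass to the product model via $\Phi$, where $z=Z$, and write the Laurent expansion \eqref{eq-laurent} of $h$ as $\sum_{(j,k)\in\mathbb{Z}^2}b_{jk}Z^jW^k$, so that the coefficient of $u$ becomes $a=\dfrac{\overline Z}{1+\abs{Z}^2}+\sum_{j,k}b_{jk}Z^jW^k$. Using the volume form \eqref{eq-volform} and the orthogonality of the monomials $W^k$ over the annulus $\mathbb{A}$ with respect to the rotation-invariant weight $\abs{W}^{-2}\,dV(W)$, the quantity $\norm{u}^2$ equals a positive constant times $\sum_{k\in\mathbb{Z}}c_k\int_{\cx^*}\abs{A_k(Z)}^2\,dV(Z)$, with finite positive constants $c_k$ and
\[ A_0(Z)=\frac{\overline Z}{1+\abs{Z}^2}+h_0(Z),\qquad h_0(Z):=\sum_{j\in\mathbb{Z}}b_{j0}Z^j. \]
In particular $A_0\in L^2(\cx)$. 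A short removable-singularity argument then improves this: since $\dfrac{\overline Z}{1+\abs{Z}^2}$ is bounded near the origin, $h_0$ is square integrable near $0$, so its Laurent series has no negative powers and $h_0$ is entire; hence $\dfrac{\partial A_0}{\partial\overline Z}=(1+\abs{Z}^2)^{-2}$ holds distributionally on all of $\cx$ (with no mass concentrated at $0$), so that $TA_0=(1+\abs{Z}^2)^{-1}\in L^2(\cx)$ and therefore $A_0\in\dom(T)$.

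Now, under the Hilbert-space isomorphisms $\eta,\theta$ of Lemma~\ref{lem-commdiag}, the element $A_0$ corresponds to a form $A_0\,dz\wedge\psi\in L^2_{2,0}(\oh,\text{$z$-only})\cap\dom(\dbar)$, and commutativity of the diagram there gives
\[ \dbar\bigl(A_0\,dz\wedge\psi\bigr)=\theta^{-1}(TA_0)=\theta^{-1}\bigl((1+\abs{z}^2)^{-1}\bigr)=\overline\phi\wedge\phi\wedge\psi=g. \]
Thus $g$ would lie in the range of $\dbar$ restricted to $z$-only $(2,0)$-forms, contradicting Proposition~\ref{prop-zonly}. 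Hence no such $u$ exists, and $g$ is not in the range of $\dbar:L^2_{2,0}(\oh)\to L^2_{2,1}(\oh)$.

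The step I expect to require the most care is the bookkeeping preceding the contradiction: verifying that $\dbar u=g$ genuinely forces $a=\dfrac{\overline z}{1+\abs{z}^2}+(\text{holomorphic})$, and --- more subtly --- that the extracted component $A_0$ lies in the \emph{domain} of the maximal operator $T$ (equivalently, that its distributional $\dbar$ carries no point mass at $z=0$), which is precisely what legitimizes the reduction to Proposition~\ref{prop-zonly}. A less computational alternative would be to average $u$ over the isometric circle action $(Z,W)\mapsto(Z,e^{i\tau}W)$ on $\oh$, under which $g$ is invariant; closedness of $\dbar$ then yields an invariant solution $u_0$, and the two displayed scalar equations together with invariance force $u_0$ to be $z$-only, again contradicting Proposition~\ref{prop-zonly}.
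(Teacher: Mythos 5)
Your proof is correct and follows essentially the same strategy as the paper's: assume an $L^2$ solution exists, project its coefficient onto the $W$-independent part to manufacture a solution in $L^2_{2,0}(\oh,\text{$z$-only})$, and contradict Proposition~\ref{prop-zonly}. The only difference is the averaging device --- the paper averages the coefficient over $\mathbb{A}$ with respect to Lebesgue measure and differentiates under the integral, while you extract the zeroth Laurent coefficient in $W$ and, usefully, make explicit why the resulting function lies in $\dom(T)$ across $Z=0$, a point the paper passes over.
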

\begin{proof} In view of the previous proposition, it is sufficient to prove that if there is a solution $  \mu$ in $ L^2_{2,0}(\oh)$ satisfying the equation
\[ \dbar  \mu=g = \overline{\phi}\wedge\phi\wedge\psi\]
then there is also a solution $\lambda\in L^2_{2,0}(\oh, \text{$z$-{\rm only}})$.

We work in the product model $\cx^*\times \mathbb{A}$ with natural coordinates $(Z,W)$. Then we have
\[ g = \frac{d\overline{Z}\wedge dZ \wedge dW}{(1+\abs{Z}^2)^2 W},\]
and suppose that in these coordinates the solution $\mu$ is given as
$\displaystyle{ \mu= u(Z,W) \frac{dZ}{1+\abs{Z}^2} \wedge \frac{dW}{W}},$
where from $\dbar\mu =g$ we conclude that
\[ \frac{\partial}{\partial\overline{Z}}\left( \frac{u(Z,W)}{1+\abs{Z}^2}\right) = \frac{1}{(1+\abs{Z}^2)^2},\]
from which we get the relation
\[ u_{\overline{Z}}\cdot(1+\abs{Z}^2) - Z\cdot u =1.\]
Since $\mu$ is in $L^2$, it follows that
\[ \int_{\cx^*\times\mathbb{A}} \frac{ \abs{u(Z,W)}^2}{(1+\abs{Z}^2)^2\abs{W}^2}dV <\infty,\]
where $dV$ is the volume form of $\cx^2$.
Let $v$ be defined as a function of $Z$ by
\[ v(Z)=\frac{1}{\abs{\mathbb{A}}} \int_{\mathbb{A}}u(Z,W) dA(W)\]
where $dA$ is Lebesgue measure on the plane.
Note that 
\begin{align*}
\frac{\partial}{\partial \overline{Z}} \left( \frac{v(Z)}{1+\abs{Z}^2}\right) &= \frac{v_{\overline{Z}}\cdot(1+\abs{Z}^2) - Z\cdot v }{(1+\abs{Z}^2)^2}\\
&= \frac{1}{(1+\abs{Z}^2)^2}\cdot \frac{1}{\abs{\mathbb{A}}}\cdot \int_{\mathbb{A}}\left(u_{\overline{Z}}\cdot(1+\abs{Z}^2) - Z\cdot u \right)dA\\
&=\frac{1}{(1+\abs{Z}^2)^2}.
\end{align*}
By the Cauchy-Schwarz inequality, $\abs{v(Z)}^2 \leq \abs{\mathbb{A}}^{-1} \int_{\mathbb{A}}\abs{u(Z,W)}^2 dA(W)$. Integrating this over
$\mathbb{\cx^*}$ it follows that $v\cdot (1+\abs{Z}^2)^{-1}\in L^2(\cx)$.

 We set
\[ \lambda = v(Z) \frac{dZ}{1+\abs{Z}^2} \wedge \frac{dW}{W},\]
which is a (2,0)-form. From the fact that  $v\cdot (1+\abs{Z}^2)^{-1}\in L^2(\cx)$, it follows that $\lambda \in L^2_{2,0}(\oh, \text{$z$-only})$.
Further,
\[ \dbar \lambda = \frac{\partial}{\partial \overline{Z}}\left( \frac{v(Z)}{1+\abs{Z}^2}\right) d\overline{Z}\wedge dZ \wedge \frac{dW}{W}= g.\]
The claim is proved.
\end{proof}

\bigskip
\noindent
{\bf Remarks:} 
\smallskip
\noindent
(1) 
In spite of Theorem~\ref{thm-main} and  Proposition~\ref{prop-zonly}, we do not have a full understanding of the $L^2$ theory of the
manifold $\oh$. In particular, it would be interesting to compute the  $L^2$-cohomology of $\oh$ in other degrees, and to know whether the form $g$ of the preceding 
lemma lies in the closure of the range of $\dbar$. 
In the earlier examples  by Serre \cite{serre} or Malgrange \cite{malgrange}, the groups $H^{2,1}$, $H^{1,1}$  and $H^{0,1}$ are 
isomorphic. 

\smallskip
\noindent
(2) We note that for a bounded pseudoconvex domain with smooth boundary  in   $\p^n$,  there does not  exist any hypersurface which 
satisfies the condition (a) in  Proposition~\ref{prop-tool} (see   \cite[Theorem~1.3]{do1}).


\end{document}